\theoremstyle{plain} 
\newtheorem{theorem}{\indent\sc Theorem}[section]
\newtheorem{lemma}[theorem]{\indent\sc Lemma}
\newtheorem{conj}[theorem]{\indent\sc Conjecture}
\newtheorem{corollary}[theorem]{\indent\sc Corollary}
\newtheorem{prop}[theorem]{\indent\sc Proposition}
\newtheorem{claim}[theorem]{\indent\sc Claim}
\theoremstyle{definition} 
\newtheorem{definition}[theorem]{\indent\sc Definition}
\newtheorem{remark}[theorem]{\indent\sc Remark}
\newtheorem{example}[theorem]{\indent\sc Example}
\newtheorem{notation}[theorem]{\indent\sc Notation}
\newtheorem{assertion}[theorem]{\indent\sc Assertion}
\newcommand{\sL}{{\mathcal{L}}}
\newcommand{\sO}{{\mathcal{O}}}
\newcommand{\sC}{{\mathcal{C}}}
 \newcommand{\resp}{{\it resp.} }
\newcommand{\ie}{{\it i.e.} }
\newcommand{\eg}{{\it e.g.} }
\newcommand{\cf}{{\it cf.} }
\newcommand{\loccit}{{\it loc. cit.} }
\newcommand{\Q}{\mathbb{Q}}
\newcommand{\R}{\mathbb{R}}
\newcommand{\C}{\mathbb{C}}
  \newcommand{\Z}{\mathbb{Z}}
   \newcommand{\coim}{\operatorname{coim}}
\newcommand{\coker}{\operatorname{coker}}
\newcommand{\IM}{\operatorname{Im}}
\newcommand{\Coim}{\operatorname{Coim}}
\newcommand{\rk}{\operatorname{rk}}
\newcommand{\im}{\operatorname{im}}
 \newcommand{\Spec}{\operatorname{Spec}}
\newcommand{\un}{\mathbf{1}}
\newtheorem*{remark0}{\indent\sc Remark}
\def\address#1#2{\begingroup
\noindent\parbox[t]{7.8cm}{%
\small{\scshape\ignorespaces#1}\par\vskip1ex
\noindent\small{\itshape E-mail address}%
\/: #2\par\vskip4ex}\hfill%
\endgroup}%
\title{\uppercase{On {nef} and semistable hermitian lattices, and their behaviour under tensor product}} 
\author{
  \small{\it heartily dedicated to Professor Daniel Bertrand on his sixtieth birthday} 
\bigskip \\
\textsc{Yves ANDR\'E} 
}
  \date{\small (revised version, 7 august 2010)}  
\begin{document}

\maketitle




 
 \begin{abstract} We study the behaviour of semistability under tensor product in various settings: vector bundles, euclidean and hermitian lattices (alias Humbert forms or Arakelov bundles), multifiltered vector spaces. 
 
One approach to show that
semistable vector bundles in characteristic zero are preserved by tensor product
is based on the notion of nef vector bundles. We revisit this approach and show how far it can be transferred to hermitian lattices. J.-B. Bost conjectured that semistable hermitian lattices are preserved by tensor product. Using properties of {nef} hermitian lattices, we establish an inequality which improves on earlier results in that direction. On the other hand, we show that, in contrast to {nef} vector bundles, {nef} hermitian lattices are not preserved by tensor product. 
 
We axiomatize our method in the general context of monoidal categories, and give an elementary proof of the fact semistable multifiltered vector spaces (which play a role in diophantine approximation) are preserved by tensor product. 


 \end{abstract}

\section*{Introduction} 
 \begin{sloppypar}

 ${\bf 0.1.}$   Notions of (semi)stability and slope filtrations have been introduced and developed in many different mathematical contexts, often independently, sometimes by analogy.
In \cite{A}, we have shown how all these slope filtrations are, beyond analogy, special instances of a general notion which obeys a very simple formalism.

In the present sequel to \cite{A}, using the general formalism only as a guiding thread, we revisit and exploit some of these concrete analogies, with emphasis on the case of euclidean (and hermitian) lattices.

\medskip  The theory of euclidean lattices has evolved in connection with crystallography, algebraic number theory and, more recently, cryptography and mathematical physics. Nevertheless, since Hermite's time, its main focus has remained, against the backcloth of classification problems, on the question of finding shortest or nearest vectors, or short nearly orthogonal bases, and on the related question of finding lattices with good ``packing" or ``kissing" properties.  
 Reduction theory aims at estimating the length of short vectors, and more generally the (co)volumes of small sublattices of lower ranks, of lattices of given rank and (co)volume, and at combining lower and upper bounds to get finiteness results.   
 
 A better grasp on lower bounds comes from the more recent part of reduction theory which deals with semistability and slope filtrations (heuristically, semistability means that the Minkowski successive minima are not far from each other, \cf \cite{Bor}).
   These notions were introduced by U. Stuhler \cite{St}, inspired by the analogy with semistability and slope filtrations for vector bundles on curves (Mumford, Harder-Narasimhan). They have been further developed in this spirit in the context of Arakelov geometry. They provide interesting finite partitions of the space of isometry classes of lattices \cite{Gr}\cite{C} and fundamental domains for the action of $SL_n(\Z)$ on the space of positive quadratic forms of rank $n$ and fixed discriminant.   Curiously, however, they do not seem to have attracted interest among ``classical" lattice-theorists. It is only very recently that an algorithm has been devised to compute slope filtrations \cite{Ki} (which has also helped to investigate the relation between semistability and Voronoi's classical notion of perfection, \loccit).
   
 \medskip 
  
 \noindent  ${\bf 0.2.}$ To be more specific, let $\bar E$ be an euclidean lattice, \ie a free abelian group $E$ of finite rank with an euclidean structure $\langle\; ,\;\rangle$ on the real vector space which its spans. The analog of the {\it degree} of a vector bundle is given by 
 $$ \widehat{\rm deg}\,\bar E = -{\log {\rm  vol}\, \bar E}.$$  It behaves additively in short exact sequences (the euclidean structure of the middle term inducing the euclidean structure of the other terms).
 If $\bar E \neq 0$, one defines the {\it slope} by
 $$\mu(\bar E) =  \frac{ \widehat{\rm deg}\,\bar E}{{\rm rk}\, \bar E}\,\cdot$$  
 One introduces the supremum $\mu_{\max}(\bar E)$ of the slopes of all nonzero sublattices (of any rank) of $\bar E$, and one says that $\bar E$ is {\it semistable} if $\mu(\bar E)=\mu_{\max}(\bar E)$. 
 In that case, the dual $\bar E^\vee$ is also semistable, of opposite slope.  Any euclidean lattice is, in a unique way, a successive extension of semistable ones with increasing slopes.

\medskip For instance, any integral lattice $\bar E$  (\ie such that the euclidean product takes integral values on $\bar E$) satisfies $\mu_{\max}(\bar E)\leq 0$, and it is unimodular if and only if $\mu(\bar E)=0$; in that case, $\mu_{\max}(\bar E)$ is also $0$. Therefore any unimodular integral lattice is semistable of slope $0$ (examples: root lattice ${\mathbb E}_8$,  Leech lattice). Indecomposable root lattices are semistable \cite{Ki}.
  
 If one associates to any point $\tau$ of the upper half plane the plane lattice generated by $1, \tau$,  
 the (contractible!) region of semistable lattices of rank two then corresponds to the closed exterior of the Ford circles in the strip $0<\Im \tau \leq 1$ (\cf \cite{C}).
   
 \medskip 

 \noindent  ${\bf 0.3.}$    The study of tensor products of euclidean lattices has been undertaken by Y. Kitaoka in a series of papers (\cf \eg \cite[Ch. 7]{K}), notably from the viewpoint of shortest vector problems. 
 
For any finite family of nonzero euclidean lattices $\bar E_i$, one has the formula
  \begin{equation}\label{0.3.1}\mu(\otimes \bar E_i)= \sum \mu(\bar E_i).\end{equation}

  \smallskip In the context of vector bundles on a projective smooth curve, in characteristic zero, it is a well-known (but non-trivial) that the tensor product of two semistable objects is semistable.
 
 J.-B. Bost has conjectured the same for euclidean lattices\footnote{\textsc{J.-B. Bost}, Hermitian vector bundles and stability, talk at Oberwolfach, 
 Algebraische Zahlentheorie, July 1997, quoted in \cite{Che}.}. Taking into account the additivity of $\widehat {\rm deg}$, this is equivalent to:
 
 \begin{conj}  For any finite family of nonzero euclidean lattices $\bar E_i$, one has    \begin{equation}\label{0.3.2}\mu_{\max}(\otimes \bar E_i)= \sum \mu_{\max}(\bar E_i).\end{equation}\end{conj}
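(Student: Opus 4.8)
The plan is to reduce the conjecture to a statement about \emph{semistable} lattices, and then to attempt to transplant into the arithmetic setting the classical argument showing that, over a projective smooth curve in characteristic zero, the tensor product of semistable vector bundles is semistable. An obvious induction on the number of factors, using \eqref{0.3.1}, reduces everything to the case of two factors $\bar E$ and $\bar F$. One of the two inequalities is then easy: if $\bar F_1\subseteq\bar E$ and $\bar F_2\subseteq\bar F$ are the maximal destabilising sublattices (they are semistable, with $\mu(\bar F_1)=\mu_{\max}(\bar E)$ and $\mu(\bar F_2)=\mu_{\max}(\bar F)$), then $\bar F_1\otimes\bar F_2$ is a nonzero sublattice of $\bar E\otimes\bar F$ whose induced euclidean structure is the tensor structure, so by \eqref{0.3.1} it has slope $\mu_{\max}(\bar E)+\mu_{\max}(\bar F)$, whence $\mu_{\max}(\bar E\otimes\bar F)\ge\mu_{\max}(\bar E)+\mu_{\max}(\bar F)$.

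It remains to prove $\mu_{\max}(\bar E\otimes\bar F)\le\mu_{\max}(\bar E)+\mu_{\max}(\bar F)$. Filtering $\bar E$ and $\bar F$ by their slope filtrations, the product filtration exhibits $\bar E\otimes\bar F$ as a successive extension of lattices of the form $G\otimes H$ with $G$ and $H$ semistable (and induced structure the tensor structure), where $\mu(G)\le\mu_{\max}(\bar E)$ and $\mu(H)\le\mu_{\max}(\bar F)$ since these are graded pieces of slope filtrations; by subadditivity of $\mu_{\max}$ in short exact sequences it is therefore enough to prove the inequality when $\bar E$ and $\bar F$ are themselves semistable. Rescaling the euclidean structures by positive real scalars shifts $\mu$ by arbitrary additive constants and preserves semistability, so one may further assume $\mu(\bar E)=\mu(\bar F)=0$; then $\bar E\otimes\bar F$ has slope $0$ by \eqref{0.3.1}, and the whole matter comes down to showing that $\bar E\otimes\bar F$ is \emph{semistable}.

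At this point I would import the ``nef'' method from the geometric case. Over a curve, a semistable bundle of slope $0$ is nef, its dual is nef, the tensor product of two nef bundles is nef, and a bundle which is nef with nef dual is semistable of slope $0$; chaining these four facts gives the result. So I would single out a notion of \emph{nef hermitian} (or euclidean) \emph{lattice} having the analogous formal properties --- (i) semistable of slope $\ge0$ implies nef; (ii) $\bar E$ and $\bar E^\vee$ both nef implies $\bar E$ semistable of slope $0$; (iii) nef is preserved by tensor product --- and then conclude exactly as above.

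The crux, and the step I expect to fail, is (iii). Contrary to the situation for vector bundles, I do not believe nef hermitian lattices are closed under tensor product, the reason being that passing to tensor or symmetric powers introduces metric distortion: although $\|x\otimes y\|=\|x\|\,\|y\|$ for pure tensors, an orthonormal basis of a tensor product is not built from pure tensors, and the resulting mismatch accumulates like $\tfrac{1}{2}\log(\mathrm{rank})$ in the additive normalisation of degrees. Consequently this method should only give an \emph{approximate} form of the conjecture, namely an inequality $\mu_{\max}(\bar E\otimes\bar F)\le\mu_{\max}(\bar E)+\mu_{\max}(\bar F)+c(\mathrm{rk}\,\bar E,\mathrm{rk}\,\bar F)$ with an explicit error term $c$ growing only like the logarithm of the ranks --- improving on the bounds obtained previously --- while the exact equality \eqref{0.3.2} would remain open, precisely because nef hermitian lattices are not preserved by tensor product.
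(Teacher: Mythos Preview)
The statement is Bost's \emph{conjecture}; the paper does not prove it, and your proposal correctly arrives at the same conclusion: the nef strategy yields only an approximate inequality with a logarithmic error term in the ranks (this is precisely the paper's Proposition~0.2 and Theorem~0.5), while the exact equality remains open.

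Your diagnosis of \emph{where} the argument breaks down, however, differs from the paper's. You pin the failure on step~(iii), ``nef $\otimes$ nef is nef'', and the paper does confirm (Proposition~0.6) that this fails for hermitian lattices. But the paper's own route to the approximate bound deliberately \emph{bypasses} that step. After normalising to $\mu_{\max}(\bar E_i)=0$, it shows \emph{directly} that $(\bar E_1\otimes\bar E_2)^\vee$ is nef: any rank-one sublattice $\bar L\subset(\bar E_1\otimes\bar E_2)_{S'}$ gives a nonzero map $\bar E_2^{\prime\vee}\to\bar L^\vee\otimes\bar E_1'$, and factoring through the coimage forces $\widehat\deg\,\bar L\le 0$. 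The error term enters only afterwards, when one converts ``$\bar E^\vee$ is nef'' into a bound on $\widehat\deg\,\bar E$. Geometrically, Kleiman's lemma gives $\deg\ge 0$ for nef bundles; arithmetically, the analogue (Lemma~3.2, via Zhang's theorem and the formula $\widehat\deg\,\bar E = \hat c_1(\sO_{\mathbb P(\bar E)}(1))^{\cdot r} - \tfrac{[K:\Q]r}{2}\sum_{m=2}^r\tfrac{1}{m}$) only gives $\widehat\deg\,\bar E\ge -\tfrac{[K:\Q]}{2}r\log r$. So the obstruction sits in the arithmetic Kleiman step, not in the tensor-stability of nef.

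One factual slip: you assert that ``an orthonormal basis of a tensor product is not built from pure tensors''. For euclidean or hermitian spaces this is false --- the tensor product of orthonormal bases \emph{is} orthonormal, and $\|x\otimes y\|=\|x\|\,\|y\|$ holds exactly. The metric distortion responsible for the $\tfrac12\log r$ lives elsewhere: in the elementary approach (Section~1) it is Minkowski's theorem (equivalently the Hermite constant), and in the nef approach it is the Faltings height of $\mathbb P^{r-1}$ appearing in the comparison between $\widehat\deg\,\bar E$ and $\hat c_1(\sO_{\mathbb P(\bar E)}(1))^{\cdot r}$.
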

 
This holds for instance if all $\bar E_i$ are integral unimodular, since $\otimes \bar E_i$ is also integral unimodular, hence semistable of slope $0$.
 
  \smallskip In spite of its elementary formulation, this conjecture seems challenging. There are a number of partial results about it in small rank, \cf \eg \cite{DSP}\cite{Za}. Note that the lower bound $\,\mu_{\max}(\otimes \bar E_i)\geq \sum \mu_{\max}(\bar E_i) \,$ follows from \eqref{0.3.1}. In \cite[3.37]{BK},  the following upper bound is proven
  
 \begin{prop}\label{prop02}   \begin{equation}\label{0.3.3}\mu_{\max}(\otimes \bar E_i)\leq \sum (\mu_{\max}(\bar E_i)+ \frac{1}{2}\log {\rm rk}\,\bar E_i).\end{equation}
 \end{prop}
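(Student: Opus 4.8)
The plan is to sandwich $\mu_{\max}(\otimes\bar E_i)$ between two elementary bounds relating $\mu_{\max}$ of a euclidean lattice $\bar A$ to the length $\lambda_1(\bar A)$ of a shortest nonzero vector of $\bar A$. The first is the upper estimate
$$\mu_{\max}(\bar A)\ \leq\ -\log\lambda_1(\bar A)+\tfrac12\log(\rk\bar A)\qquad(\rk\bar A\geq 1),$$
valid for every euclidean lattice: if $\bar G\subseteq\bar A$ is a nonzero sublattice of rank $r$, then $\lambda_1(\bar G)\geq\lambda_1(\bar A)$, while Minkowski's convex body theorem yields ${\rm vol}\,\bar G\geq\gamma_r^{-r/2}\lambda_1(\bar G)^{r}$, where $\gamma_r\leq r$ is Hermite's constant (Minkowski's bound); hence $\mu(\bar G)=-\tfrac1r\log{\rm vol}\,\bar G\leq -\log\lambda_1(\bar A)+\tfrac12\log\gamma_r\leq -\log\lambda_1(\bar A)+\tfrac12\log(\rk\bar A)$, and one takes the supremum over $\bar G$.

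The real point is the matching lower estimate
$$\lambda_1(\bar E_1\otimes\cdots\otimes\bar E_k)\ \geq\ \exp\Big(-\sum_{i=1}^{k}\mu_{\max}(\bar E_i)\Big),$$
which I would establish by induction on $k$. For $k=1$ it just says that a shortest vector $v$ of $\bar E_1$ spans a rank-one sublattice, of slope $-\log\|v\|\leq\mu_{\max}(\bar E_1)$. For $k\geq 2$, choose $v$ realizing $\lambda_1(\bar E_1\otimes\cdots\otimes\bar E_k)$ and regard it, through $E_1\otimes M\cong\operatorname{Hom}(E_1^\vee,M)$, as a $\Z$-linear map $v\colon E_1^\vee\to E_2\otimes\cdots\otimes E_k$; let $r\geq 1$ be its rank and $\sigma_1,\dots,\sigma_r>0$ its singular values, so $\|v\|^2=\sum_j\sigma_j^2$, whence $\|v\|\geq\sqrt r\,(\prod_j\sigma_j)^{1/r}$ by the arithmetic--geometric mean inequality. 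The product $\prod_j\sigma_j$ equals ${\rm vol}(\overline{v(E_1^\vee)})/{\rm vol}(\overline{E_1^\vee/\ker v})$, where $\overline{v(E_1^\vee)}$ carries the metric induced from $\bar E_2\otimes\cdots\otimes\bar E_k$ and $\overline{E_1^\vee/\ker v}$ the quotient metric on $E_1^\vee$ modulo its saturated intersection with $\ker v$. Since $\overline{v(E_1^\vee)}$ is a rank-$r$ sublattice of $\bar E_2\otimes\cdots\otimes\bar E_k$, Minkowski and the induction hypothesis give ${\rm vol}(\overline{v(E_1^\vee)})\geq\gamma_r^{-r/2}\lambda_1(\bar E_2\otimes\cdots\otimes\bar E_k)^{r}\geq\gamma_r^{-r/2}\exp(-r\sum_{i\geq 2}\mu_{\max}(\bar E_i))$; and since $\overline{E_1^\vee/\ker v}$ is a rank-$r$ quotient lattice of $\bar E_1^\vee$ --- and a quotient lattice of $\bar E_1^\vee$ dualizes to a sublattice of $\bar E_1$, hence has slope $\geq -\mu_{\max}(\bar E_1)$ --- we get ${\rm vol}(\overline{E_1^\vee/\ker v})\leq\exp(r\,\mu_{\max}(\bar E_1))$. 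Combining, $\|v\|\geq\sqrt r\,\gamma_r^{-1/2}\exp(-\sum_i\mu_{\max}(\bar E_i))\geq\exp(-\sum_i\mu_{\max}(\bar E_i))$, because $\gamma_r\leq r$.

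Feeding the first bound, applied to $\bar A=\otimes\bar E_i$ (of rank $\prod_i\rk\bar E_i$), into the second then gives $\mu_{\max}(\otimes\bar E_i)\leq -\log\lambda_1(\otimes\bar E_i)+\tfrac12\log\prod_i\rk\bar E_i\leq\sum_i\mu_{\max}(\bar E_i)+\tfrac12\sum_i\log\rk\bar E_i$, which is \eqref{0.3.3}. The step I expect to need the most care is the volume identity for the singular values of $v$ --- one must keep straight the quotient metric on the source and the induced metric on the image --- together with the numerical fact $\gamma_r\leq r$, which is precisely what lets the factor $\sqrt r$ from the arithmetic--geometric mean inequality absorb the Minkowski loss $\gamma_r^{-1/2}$ and leave the clean constant $\tfrac12$. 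The rest is soft: the slack in the $\tfrac12\log\rk$ term is what makes such crude inputs sufficient, which is also why this method falls short of the conjectural equality \eqref{0.3.2}.
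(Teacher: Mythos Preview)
Your proof is correct and follows essentially the same two-step strategy as the paper: first show that the shortest nonzero vector of $\otimes\bar E_i$ has length at least $\exp(-\sum_i\mu_{\max}(\bar E_i))$, then convert this into a bound on $\mu_{\max}$ via Hermite/Minkowski ($\gamma_r\leq r$).

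The one notable difference is in the proof of the $\lambda_1$ bound. The paper normalizes to $\mu_{\max}(\bar E_i)=0$, treats only two factors, and uses only the coarse inequality ``Hilbert--Schmidt norm $\geq$ operator norm'' to conclude that the induced map $\bar E_2^\vee\to L^\vee\otimes\bar E_1$ is a morphism, hence contracts volumes; this immediately forces ${\rm vol}\,\bar L\geq 1$. Your argument is a refinement: you factor the vector through its singular values and use the arithmetic--geometric mean inequality $\|v\|\geq\sqrt r\,(\prod_j\sigma_j)^{1/r}$, together with Hermite on the image lattice. For two factors this extra strength is not needed (you could bound ${\rm vol}(\overline{v(E_1^\vee)})$ directly by $\exp(-r\,\mu_{\max}(\bar E_2))$ without invoking $\gamma_r$), but your version has the virtue of making the induction on $k$ go through cleanly, since the $\sqrt r$ from AM--GM exactly cancels the $\gamma_r^{-1/2}$ introduced by Hermite at each step. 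The paper's operator-norm argument does not iterate so directly to $k\geq 3$ factors, so your formulation is in fact a tidy way to justify the paper's opening claim that ``it is enough to take $i\in\{1,2\}$'' with the stated constant.
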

 
 In the first section of this paper, we present a quick elementary proof of this inequality\footnote{the results of this text where first exposed in Paris in the conference in honour of D. Bertrand (march 2009). At the time, I was not yet aware that the reference \cite{BK} contained a proof of Proposition \ref{prop02}.}.
 
\medskip

  \noindent  ${\bf 0.4.}$  In the second section, we turn to vector bundles on a projective smooth curve $S$ over a field $k$ of characteristic zero. Recall that the slope of a nonzero vector bundle $E$ on $S$ is $\mu(E)= \deg E/ {\rm rk}\, E$, and that $E$ is semistable if all nonzero subbundles $F$ have lower or equal slope.
  
  Tensor products of semistable vector bundles are semistable (in the terminology of \cite{A}, the Harder-Narasimhan slope filtration is $\otimes$-multiplicative); equivalently:
  
  \begin{theorem}  For any finite family of nonzero vector bundles $ E_i$ on $S$, one has    \begin{equation}\label{0.4.1}\mu_{\max}(\otimes   E_i)= \sum \mu_{\max}(  E_i).\end{equation}
  \end{theorem}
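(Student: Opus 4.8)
The plan is to prove the two inequalities $\mu_{\max}(\otimes E_i)\geq\sum\mu_{\max}(E_i)$ and $\mu_{\max}(\otimes E_i)\leq\sum\mu_{\max}(E_i)$ separately, the first being formal and the second resting on the theory of nef bundles. For the first, the identity $\deg(E\otimes F)=\rk(F)\deg E+\rk(E)\deg F$ gives the additivity $\mu(E_1\otimes\cdots\otimes E_n)=\sum\mu(E_i)$ for the full tensor product; applying it to the maximal destabilizing subbundles $F_i\subseteq E_i$ (semistable of slope $\mu_{\max}(E_i)$) yields a subbundle $F_1\otimes\cdots\otimes F_n\subseteq E_1\otimes\cdots\otimes E_n$ of slope $\sum\mu_{\max}(E_i)$, whence the lower bound.

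For the upper bound I would first reduce, by Harder--Narasimhan theory, to the assertion that a tensor product of \emph{semistable} bundles is semistable. Filtering each $E_i$ so that the graded pieces $\gr^{j}E_i$ are semistable of slopes $\leq\mu_{\max}(E_i)$ induces on $\otimes E_i$ a filtration whose graded pieces are the tensor products $\bigotimes_i\gr^{j_i}E_i$; granting the semistable case, these are semistable of slope $\sum_i\mu(\gr^{j_i}E_i)\leq\sum_i\mu_{\max}(E_i)$, and an easy induction on the length of a filtration shows that a bundle admitting a filtration with all graded slopes $\leq s$ has $\mu_{\max}\leq s$. So it remains to show: if all $E_i$ are semistable, then $\otimes E_i$ is semistable (necessarily of slope $\sum\mu(E_i)$, by the additivity above).

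Here I would reduce further to \emph{slope zero}. Twisting by a line bundle, and pulling back along a finite morphism $\pi\colon C\to S$, both preserve semistability --- the pullback step being precisely where $\mathrm{char}\,k=0$ is used --- while scaling the slope by $\deg\pi$ and shifting it by a degree; these operations also reflect semistability, since a putative destabilizing subsheaf pulls back to one. Taking $\deg\pi$ divisible by all the $\rk E_i$ and choosing suitable line bundles $M_i$ on $C$, the bundles $G_i:=\pi^{*}E_i\otimes M_i$ become semistable of slope $0$, and since $\otimes G_i=\pi^{*}(\otimes E_i)\otimes\bigotimes M_i$, it suffices to treat semistable bundles of slope $0$. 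For such a bundle $F$ one has $\mu_{\min}(F)\geq 0$, and this is equivalent to $F$ being nef: indeed $F$ is nef iff $\mathcal{O}_{\mathbb{P}(F)}(1)$ is nef on $\mathbb{P}(F)$, which, testing against the curves of $\mathbb{P}(F)$ --- i.e. against quotient line bundles of finite pullbacks $\pi^{*}F$ --- translates, in characteristic $0$ where $\mu_{\min}$ scales by $\deg\pi$ under such pullbacks, into $\mu_{\min}(F)\geq 0$. Hence each $G_i$ is nef, and so is each $G_i^\vee$ (semistable of slope $0$ as well). Invoking the fact that a tensor product of nef bundles is nef, both $\otimes G_i$ and $(\otimes G_i)^\vee=\otimes G_i^\vee$ are nef, so $\mu_{\min}(\otimes G_i)\geq 0$ and $\mu_{\max}(\otimes G_i)=-\mu_{\min}(\otimes G_i^\vee)\leq 0$; combined with $\mu_{\min}\leq\mu(\otimes G_i)=0\leq\mu_{\max}$, all three slopes coincide and $\otimes G_i$ is semistable.

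The one non-formal ingredient, and the main obstacle, is the assertion that a tensor product of nef vector bundles is again nef --- the geometric heart of the argument. Together with the characteristic-zero behaviour of slopes under finite pullbacks (needed to identify nefness with the bound $\mu_{\min}\geq 0$, and to descend semistability), it is where the hypothesis $\mathrm{char}\,k=0$ genuinely enters; in positive characteristic the statement of the theorem fails in general. Everything else is bookkeeping with Harder--Narasimhan filtrations and additivity of the slope.
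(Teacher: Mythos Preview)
Your argument is correct, but it diverges from the paper's at the crucial step. You reduce to the semistable slope-zero case and then invoke the fact that a tensor product of nef bundles is nef as a black box. The paper deliberately \emph{avoids} this fact: after normalising so that $\mu_{\max}(E_i)=0$ (without first passing to the semistable case via Harder--Narasimhan), it shows directly that $(E_1\otimes E_2)^\vee$ is nef by the adjunction trick --- any rank-one sub $L\hookrightarrow E'_1\otimes E'_2$ over a finite cover yields a nonzero map $(E'_2)^\vee\to L^\vee\otimes E'_1$, and comparing the slope of its coimage with that of its image forces $\deg L\leq 0$. Then Kleiman's inequality $\deg\geq 0$ for nef bundles (Lemma~\ref{klei}) finishes. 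The payoff is twofold: first, ``nef $\otimes$ nef is nef'' becomes a \emph{consequence} rather than an input (see the Remark after \S2.3); second, and this is the paper's real motivation, the argument transfers verbatim to hermitian lattices, where the analogue of ``nef $\otimes$ nef is nef'' is \emph{false} (Proposition~0.6), so your route would be blocked there. Your Harder--Narasimhan reduction to the semistable case is also an extra step the paper does not need.
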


   There are three known proofs. 
 One proof uses the ``transcendental" description by M. Narasimhan and C. Seshadri of semistable vector bundles in terms of unitary representations of the fundamental group \cite{NS}.
 
 Another one uses geometric invariant theory and Kempf filtrations \cite{RR}.
 
 A third one (\cf \cite{Maru}\cite{Mi}\cite{Laz}\cite{Br}) relies on the relation between semistability and numerical effectivity (a vector bundle is {\it {nef}} if its pull-back along any finite covering of $S$ has no quotient line bundle of negative degree), and more precisely on the well-known

   \begin{prop} A vector bundle of degree zero is {nef} if and only if it is semistable.
  The tensor product of {nef} vector bundles is {nef}.
  \end{prop}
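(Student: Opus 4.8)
The plan is to prove the second assertion first (stability of {nef}ness under tensor product) and then to deduce the first from it. For the tensor--product closure I would pass to projective bundles. Unwinding the definition, a vector bundle $G$ on $S$ is {nef} if and only if the tautological line bundle $\mathcal{O}_{\mathbb{P}(G)}(1)$ is {nef} on $\mathbb{P}(G)$ in the usual sense of having non--negative degree on every curve: a curve of $\mathbb{P}(G)$ contained in a fibre of $\mathbb{P}(G)\to S$ lies in a projective space, where $\mathcal{O}(1)$ is very ample, and a curve dominating $S$ yields, after normalization, a finite covering $\pi\colon S'\to S$ together with a quotient line bundle of $\pi^{\ast}G$ to which $\mathcal{O}(1)$ restricts, and conversely. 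Granting the standard formal properties of {nef} line bundles on projective varieties (preservation under pull--back along arbitrary morphisms, under sums, and descent along surjective proper morphisms), one obtains directly that pull--back of a {nef} bundle along a finite morphism is {nef}, that quotients and hence direct summands of {nef} bundles are {nef}, and that a direct sum of {nef} bundles is {nef} (a surjection $\pi^{\ast}(E\oplus F)\twoheadrightarrow L$ either factors through $\pi^{\ast}F$, or $\pi^{\ast}E$ maps onto some $L(-D)$ with $D$ effective, so $\deg L\geq\deg D\geq 0$ by {nef}ness of $\pi^{\ast}E$). Since $E\otimes F$ is a direct summand of $(E\oplus F)^{\otimes 2}$, and more generally $V^{\otimes m}$ is a direct summand of the symmetric power $S^{m}(V^{\oplus m})$, the whole statement reduces to the single assertion that symmetric powers of {nef} bundles are {nef}.

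This reduction is the crux, and the main obstacle is precisely that the Veronese map $\mathbb{P}(G)\to\mathbb{P}(S^{m}G)$ (and likewise the fibrewise Segre map $\mathbb{P}(E)\times_{S}\mathbb{P}(F)\to\mathbb{P}(E\otimes F)$) is \emph{not} surjective, so one cannot conclude {nef}ness of $\mathcal{O}_{\mathbb{P}(S^{m}G)}(1)$ simply by pulling it back. To get past this I would use that {nef}ness is a limiting form of ampleness --- concretely, $\mathcal{O}_{\mathbb{P}(G)}(1)$ is {nef} iff all sufficiently positive twists $\mathcal{O}_{\mathbb{P}(G)}(m)\otimes\pi^{\ast}A$ ($A$ ample on $S$, $m\gg 0$) are ample, i.e.\ roughly iff $S^{m}G\otimes A$ is ample for $m\gg 0$ --- together with Hartshorne's theorem that symmetric and tensor powers of ample vector bundles are again ample. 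The virtue of ampleness is that it is governed by Serre--type vanishing, $H^{i}(\mathcal{F}\otimes S^{m}(-))=0$ for $i>0$ and $m\gg 0$, rather than by curves, and such vanishing passes to direct summands and quotients; this is exactly what the summand decompositions above need, and it sidesteps the non--surjectivity. (In characteristic zero one may instead just invoke the corresponding closure statements for {nef} bundles from the standard references on positivity.) I also use that in characteristic zero semistability and the Harder--Narasimhan filtration are stable under finite pull--back, by Galois descent of the canonical filtration.

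Granting the tensor--product closure, the equivalence for a bundle $E$ of degree zero is short. If $E$ is semistable then $\mu_{\min}(E)=\mu_{\max}(E)=\mu(E)=0$; for any finite covering $\pi$, $\pi^{\ast}E$ is again semistable with $\mu(\pi^{\ast}E)=(\deg\pi)\,\mu(E)=0$, so every quotient line bundle $L$ of $\pi^{\ast}E$ has $\deg L\geq\mu_{\min}(\pi^{\ast}E)=0$, and $E$ is {nef}. Conversely, suppose $E$ is {nef} of degree zero but not semistable, and let $E\twoheadrightarrow E_{\min}$ be the last quotient of its Harder--Narasimhan filtration, so $E_{\min}$ is semistable of some rank $r'\geq 1$ and of slope $\mu_{\min}(E)<0$. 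Then $\det E_{\min}=\Lambda^{r'}E_{\min}$ is a quotient line bundle of $\Lambda^{r'}E$, which is itself a quotient of $E^{\otimes r'}$; by the tensor--product closure $E^{\otimes r'}$ is {nef}, hence so is its quotient line bundle $\det E_{\min}$, which thus has non--negative degree. But $\deg\det E_{\min}=r'\,\mu_{\min}(E)<0$, a contradiction. Therefore $\mu_{\min}(E)=\mu(E)=0$, i.e.\ $E$ is semistable, completing the proof.
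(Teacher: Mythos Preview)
Your argument is correct, and it follows the classical route (essentially the one in the references \cite{Maru}\cite{Mi}\cite{Laz}\cite{Br} that the paper cites for this proposition). The paper, however, orders the logic in the opposite direction, and this reversal is one of its points.

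For the equivalence, the paper observes that Kleiman's lemma alone suffices: if $E$ is nef of degree $0$ and $F\subset E$ is any subbundle, then $E/F$ is nef (quotients of nef are nef by definition), so $\deg(E/F)\geq 0$ by Kleiman, hence $\deg F\leq 0$ and $\mu(F)\leq 0=\mu(E)$. No tensor powers are needed; your detour through $E^{\otimes r'}$ and $\det E_{\min}$ works but is heavier machinery than the problem requires. The converse is as you say.

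For ``nef $\otimes$ nef is nef'', the paper does \emph{not} go through Hartshorne's ampleness theory. Instead it proves Theorem~0.3 ($\mu_{\max}(E_1\otimes E_2)=\mu_{\max}(E_1)+\mu_{\max}(E_2)$ in characteristic~$0$) directly, using only Kleiman's inequality and the elementary device of turning a rank-one sublattice of $E_1'\otimes E_2'$ into a map $(E_2')^\vee\to L^\vee\otimes E_1'$. It then remarks that ``nef $\otimes$ nef is nef'' becomes a \emph{consequence} of Theorem~0.3 (via the Harder--Narasimhan filtration and the characterization ``nef $\Leftrightarrow$ all HN slopes $\geq 0$''). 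In \S5.3 the same formal argument is shown to give the tensor-closure in any characteristic.

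What each approach buys: yours is self-contained modulo Hartshorne's theorem on ample bundles, and makes the proposition logically prior to Theorem~0.3, as in the standard presentations. The paper's approach is lighter---it avoids ampleness theory entirely and uses only Kleiman's inequality---and, crucially, its shape is what can be (partially) transported to the arithmetic setting of hermitian lattices, where there is no analogue of Hartshorne's theorem and, as \S4 shows, ``nef $\otimes$ nef is nef'' actually fails.
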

  
 We give a simple version of this third proof, which does not even use the fact that the tensor product of {nef} bundles is {nef}.  Our argument works as well in the case of strongly semistable vector bundles in characteristic $p$.
 
   \medskip
    
  \noindent  ${\bf 0.5.}$  In the third section, we come back to the arithmetic situation, and examine {\it hermitian lattices} ${\bar E}$ over the ring of integers ${\frak o}_K$ of a number field $K$.  This generalization of euclidean lattices appears in Arakelov theory as (hermitian) vector bundles on the arithmetic curve $\bar S  = \Spec {\frak o}_K \cup V_\infty$ (where $V_\infty$ denotes the set of archimedean places of $K$).  In fact, they were already considered by P. Humbert in 1940, in the equivalent language of hermitian forms rather than lattices, and have been further studied in the spirit of classical lattice theory under the name ``Humbert forms" \cite{I}\cite{C4}. Curiously, however, these two trends seem to ignore each other.
  
 Taking appropriate products over $V_\infty$, one defines a variant of (co)volume for a hermitian lattice $\bar E$. One then introduces the invariants $\widehat{\rm deg}\,\bar E,\;\mu(\bar E)$  and $ \mu_{\max}(\bar E)$, and the notion of semistability, as above. 
    Bost's conjectural equality \eqref{0.3.2} actually concerns hermitian lattices, not just euclidean lattices.
   Therein we prove the following generalization of Proposition 0.2\footnote{J.-B. Bost has informed me that he has also proved this result, apparently by a different method, \cf  \textsc{J.-B. Bost}, Stability of Hermitian vector bundles over arithmetic curves and geometric invariant theory, talk at Chern Institute, Nankai, April 2007.}.   
  
 \begin{theorem}  For any finite family of nonzero hermitian lattices $\bar E_i$ on $\bar S$, one has
   \begin{equation}\label{0.4.1}\mu_{max}(\otimes \bar E_{i } ) \leq \sum_i\, (\mu_{max}(\bar E_i) + \frac{[K:\Q]}{2}   \log {\rm rk}\, \bar E_i ).\end{equation}\end{theorem}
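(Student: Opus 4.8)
The plan is to transfer to the arithmetic curve $\bar S$ the argument via numerical effectivity recalled above, keeping careful track of the archimedean defect that produces the logarithmic term. I would first reduce: since on $\bar S$ there exist rank-one hermitian lattices of arbitrary slope (rescale an archimedean metric on $\un$), and since $\widehat{\rm deg}$ is additive while $\mu$ is additive under tensor product, one may twist each $\bar E_i$ by a rank-one lattice so that $\mu_{\max}(\bar E_i)=0$ for every $i$; the claimed inequality then becomes $\mu_{\max}(\otimes\bar E_i)\le\frac{[K:\Q]}{2}\sum_i\log\rk\bar E_i$. Replacing $\otimes\bar E_i$ by its maximal destabilizing sublattice, it suffices to bound $\mu(\bar F)$ for a \emph{semistable} sublattice $\bar F\subseteq\otimes\bar E_i$, of rank $r\le\prod_i\rk\bar E_i$.

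There are two inputs, one archimedean and one ``algebraic''. The archimedean input is Minkowski's theorem for $\mathfrak{o}_K$-lattices in clean form: a nonzero semistable hermitian lattice $\bar G$ of rank $s$ contains a rank-one sublattice of slope $\ge\mu(\bar G)-\frac{[K:\Q]}{2}\log s$ (the pertinent Hermite-type constant being $\le s$, the discriminant of $K$ dropping out of the normalization of $\widehat{\rm deg}$). Applied to $\bar F$ this gives a rank-one $\bar L\subseteq\bar F$ with $\mu(\bar F)\le\mu(\bar L)+\frac{[K:\Q]}{2}\log r\le\mu(\bar L)+\frac{[K:\Q]}{2}\sum_i\log\rk\bar E_i$, so everything reduces to proving that \emph{every rank-one sublattice $\bar L\subseteq\otimes_i\bar E_i$ satisfies $\mu(\bar L)\le\sum_i\mu_{\max}(\bar E_i)$}, and this must now be obtained without any further archimedean loss. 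The algebraic input settles this for two factors: writing $\bar L=\mathfrak{o}_Kv$ with $v\in\bar E_1\otimes\bar E_2$, adjunction turns the inclusion into a morphism $\alpha\colon\bar L\otimes\bar E_1^\vee\to\bar E_2$, and since at each archimedean place the operator norm of $v$, viewed as a homomorphism $E_1^\vee\to E_2$, is at most its Hilbert--Schmidt norm, $\alpha$ is norm-decreasing. As $\bar L$ is invertible one has $\mu_{\min}(\bar L\otimes\bar E_1^\vee)=\mu(\bar L)+\mu_{\min}(\bar E_1^\vee)=\mu(\bar L)-\mu_{\max}(\bar E_1)$, and Bost's slope inequality ($\widehat{\rm deg}$ of the coimage $\le\widehat{\rm deg}$ of the image) gives $\mu(\bar L)-\mu_{\max}(\bar E_1)\le\mu(\operatorname{im}\alpha)\le\mu_{\max}(\bar E_2)$, which is the asserted inequality for $m=2$.

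For $m\ge 3$ one iterates the contraction: against $\bar E_1^\vee$ it yields $\mu(\bar L)\le\mu_{\max}(\bar E_1)+\mu(\bar M)$ with $\bar M\subseteq\bar E_2\otimes\cdots\otimes\bar E_m$ the ``support lattice'' of $v$ in the last $m-1$ factors, of rank $\le\rk\bar E_1$. The main obstacle lies exactly here: the naive continuation --- pass to the maximal destabilizer of $\bar M$, apply Minkowski again, contract again --- reintroduces a term $\tfrac{[K:\Q]}{2}\log\rk\bar E_1$ at each level, so the resulting defect is roughly \emph{twice} the one asserted. The source of the spurious loss is transparent: contracting a higher-rank sublattice, rather than a rank-one one, against $\bar E_i^\vee$ forces one to use the evaluation map $\bar E_i\otimes\bar E_i^\vee\to\un$, whose archimedean norms are $\sqrt{\rk\bar E_i}$ rather than $1$. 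Removing this loss is the real content, and the way to do it is to treat all factors at once through the notion of \emph{nef} hermitian lattice: one proves that, although the tensor product of nef hermitian lattices need not be nef (as the paper shows separately), it is nef up to a \emph{single} defect equal to $\tfrac{[K:\Q]}{2}$ times the logarithm of its rank, and this quasi-multiplicativity is what one feeds into the induction. Establishing that quasi-multiplicativity --- from the slope inequality and the Minkowski estimate above, with duality $\mu_{\max}(\bar E)=-\mu_{\min}(\bar E^\vee)$ used to pass between sub- and quotient lattices --- is the step I expect to demand the most care.
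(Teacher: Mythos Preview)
Your approach for two factors is essentially the paper's own. After normalizing so that $\mu_{\max}(\bar E_i)=0$, you (i) bound the slope of every rank-one sublattice $\bar L\subset\bar E_1\otimes\bar E_2$ by $0$ via the adjunction map $\alpha:\bar L\otimes\bar E_1^\vee\to\bar E_2$ and the coimage/image slope inequality --- this is exactly the paper's argument that $(\bar E_1\otimes\bar E_2)^\vee$ is nef --- and then (ii) pass from rank-one sublattices to an arbitrary sublattice $\bar F$ of rank $r$ at the cost of $\tfrac{[K:\Q]}{2}\log r$.

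The genuine gap is in your justification of step (ii). You claim it follows from ``Minkowski's theorem for $\mathfrak{o}_K$-lattices in clean form,'' with ``the discriminant of $K$ dropping out of the normalization of $\widehat{\deg}$.'' It does not drop out: the classical Minkowski/Hermite bound for hermitian $\mathfrak{o}_K$-lattices carries an unavoidable contribution in $\log|\mathfrak{d}_K|$, and that is precisely the extra term in the Bost--K\"unnemann inequality that Theorem~0.5 is meant to remove. Eliminating the discriminant is the whole point, and it requires a genuinely deep input: either Zhang's arithmetic analogue of Kleiman's criterion (this is how the paper proceeds, via Lemma~\ref{zhan}, which yields $\widehat{\deg}\,\bar F^\vee\ge -\tfrac{[K:\Q]}{2}r\log r$ from nefness of $\bar F^\vee$), or the absolute Siegel lemma of Roy--Thunder (which the paper notes, in the Remark following the proof, as an equivalent route). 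Neither is ``Minkowski in clean form''; the paper is explicit that this is what makes the proof ``of less elementary nature.'' So your archimedean input is correct as a statement but wrongly sourced, and the route you sketch for it would only reproduce the weaker Bost--K\"unnemann bound.

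On $m\ge 3$: your diagnosis that iterated contraction reintroduces a $\tfrac{[K:\Q]}{2}\log\rk\bar E_1$ at each step is correct, and your proposed ``quasi-nef'' fix is too vague to assess. The paper, for its part, simply writes ``it is enough to take $i\in\{1,2\}$'' and proves that case; it does not spell out the passage to more factors. The right instinct, as you say, is to treat all factors at once --- i.e.\ to prove directly that $\tilde\nu(\otimes\bar E_i)\le\sum\mu_{\max}(\bar E_i)$ and then apply Lemma~\ref{zhan} \emph{once} --- but you have not carried this out, and the two-factor argument by itself does not obviously deliver it.
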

 
 This improves on earlier results \cite[3.37]{BK}\cite{Che} (in \cite{BK}, an extra term involving the discriminant of $K$ appears, whereas H. Chen \cite{Che}, by an arithmetic elaboration of the method of \cite{RR}, finds a term $ {[K:\Q]}   \log {\rm rk}\, \bar E_i $ instead of $\frac{[K:\Q]}{2}   \log {\rm rk}\, \bar E_i $). But our proof is of less elementary nature: it relies on a difficult arithmetic analog of Kleiman's criterion proved by S. Zhang \cite{Z}. In fact, we import the notion of ``{nef}" in the context of hermitian lattices on $\bar S$, and try to follow systematically the proof which we have devised in the case of ordinary vector bundles, which uses the usual comparison between invariants of $E$ and invariants of $\sO_{\mathbb P(E)}(1)$.
 
  This comparison, for hermitian lattices, is precisely the place where the factor $\frac{[K:\Q]}{2}   \log r$ shows up (as a sharp upper bound for the Faltings height of $\mathbb P^{r-1}_K$), and one could not get rid of it in this place. 
 Indeed, in contrast to Proposition 0.4:
  
  \begin{prop} A {nef} hermitian lattice of degree zero is not necessarily semistable.
  The tensor product of two {nef} hermitian lattices of degree zero is not necessarily {nef}.
  \end{prop}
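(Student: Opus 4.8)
The plan is to extract both negative statements from a single phenomenon: in the arithmetic setting the gap between ``{nef}'' and ``semistable of slope zero'' is not zero but is controlled from below by the strict positivity of the Faltings height of projective space, i.e.\ by the very term $\frac{[K:\Q]}{2}\log r$ that occurs in Theorem 0.6 (and that vanishes geometrically). So I would first make this quantitative. By Zhang's arithmetic analogue of Kleiman's criterion, $\bar E$ is {nef} iff $\sO_{\mathbb{P}(\bar E)}(1)$, with its Fubini--Study metric, has nonnegative Arakelov degree on every irreducible curve of $\mathbb{P}(\bar E)$; vertical curves are harmless, and taking for curves the points of $\mathbb{P}(\bar E_{K'})$ over all finite extensions $K'$, this is equivalent to: no finite-covering pull-back of $\bar E$ has a quotient hermitian line bundle of negative degree, equivalently the dual has no rank one sublattice of positive slope over any covering, i.e.\ its essential minimum is $\geq 1$. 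I would record this translation together with the standard formulas comparing $\widehat{\rm deg}(\sO_{\mathbb{P}(\bar E)}(1)\vert_C)$ and the essential minimum of $\sO(1)$ to the slopes of sub- and quotient lattices of $\bar E$ and to $h_{\mathrm{Fal}}(\mathbb{P}^{r-1})$. The key observation is that the instability of $\bar E$ shows up only on the \emph{lines} of the fibres $\mathbb{P}^{r-1}$ (equivalently, on the rank one quotients of its Harder--Narasimhan pieces), whereas every curve meeting the fibres in higher degree automatically carries a positive contribution coming from the geometry of projective space.

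For the first statement I would then exhibit an explicit $\bar E$ of degree zero that is {nef} but not semistable. The natural attempt is $\bar E=\bar L\oplus\bar Q$ (or a non-split extension of $\bar Q$ by $\bar L$) with $\bar Q$ a suitably chosen hermitian lattice of small negative slope $-\varepsilon$ and $\bar L$ a line bundle of compensating slope $(\rk\bar Q)\,\varepsilon$, so that $\widehat{\rm deg}\,\bar E=0$ while $\mu_{\max}(\bar E)\geq\mu(\bar L)>0$, hence $\bar E$ is not semistable. The whole point is to choose $\bar Q$ so that $\bar Q^\vee$, although of positive slope $+\varepsilon$, still has essential minimum $\geq 1$ — no rank one sub, even over a finite covering, of positive slope — so that neither $\bar Q$ nor $\bar E$ acquires a negative rank one quotient over any covering; the remaining (``mixed'') rank one quotients of $\bar E$ are dominated by the positive slope of $\bar L$ by an elementary computation. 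For $\bar Q$ one would take a sufficiently generic lattice of rank $r$ with covolume just below $1$ (so that Minkowski/Hermite still allows $\lambda_1>1$ and, being geometrically stable, $e_1>1$), or an explicit lattice of the same type over a small number field; the margin one needs here between $e_1(\bar Q^\vee)$ and $e^{-\varepsilon}$ is of size $\sim\frac{[K:\Q]}{2}\log r$, which is exactly the Faltings-height slack made available by Zhang's criterion. This proves the first assertion.

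For the second statement I would take $\bar E$ (and a companion $\bar E'$, possibly $\bar E'=\bar E$) as produced above. The minimal Harder--Narasimhan quotient $\bar Q$ of $\bar E$ (and $\bar Q'$ of $\bar E'$) has negative slope $-\varepsilon$ ($-\varepsilon'$), and $\bar E\otimes\bar E'\twoheadrightarrow\bar Q\otimes\bar Q'$ has slope $-\varepsilon-\varepsilon'<0$. Although neither $\bar Q$ nor $\bar Q'$ had a negative rank one quotient over any covering, the tensor product $\bar Q\otimes\bar Q'$ does acquire one over a suitable finite covering: the essential minimum is only submultiplicative, $e_1\bigl((\bar Q\otimes\bar Q')^\vee\bigr)<e_1(\bar Q^\vee)\,e_1(\bar Q'^\vee)$, with a loss again of Faltings-height type — but this loss grows only like $\log(\rk\bar Q\cdot\rk\bar Q')$ while the instability has \emph{doubled}, so one can tune the scaling parameters so that $e_1\bigl((\bar Q\otimes\bar Q')^\vee\bigr)<1$, i.e.\ $\bar Q^\vee\otimes\bar Q'^\vee$ has a vector of length $<1$, a positive-slope rank one sublattice, over some finite covering. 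Pulling the corresponding rank one quotient back along $\bar E\otimes\bar E'\to\bar Q\otimes\bar Q'$ yields a quotient hermitian line bundle of negative degree on a finite-covering pull-back of $\bar E\otimes\bar E'$, so $\bar E\otimes\bar E'$ is not {nef}.

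The hard part, in both statements, is the verification that $\bar E$ (resp.\ $\bar E'$) is genuinely {nef}, i.e.\ controlling $\widehat{\rm deg}$ of $\sO_{\mathbb{P}(\bar E)}(1)$ on \emph{every} curve and after \emph{every} finite covering; Zhang's criterion reduces this to lower bounds for essential minima, equivalently for heights of curves in projective space, where the positivity of $h_{\mathrm{Fal}}(\mathbb{P}^{r-1})$ furnishes the room. The delicate point is the numerology: the rank of $\bar Q$ and the sizes of $\varepsilon,\varepsilon'$ must be chosen so that $\bar E,\bar E'$ fall on the {nef} side while $\bar E\otimes\bar E'$ falls off it, and since — as the factor $\frac{[K:\Q]}{2}\log r$ in Theorem 0.6 already warns — the slack is only logarithmic in the rank, I expect that a carefully chosen small example (over $\Q$ or a small number field, with $\bar Q$ a scaled copy of a known absolutely stable lattice) is the cleanest route; the one external input to pin down is the existence of lattices with the required essential-minimum behaviour, namely essential minimum strictly larger than $e^{-\mu}$ and strictly submultiplicative under tensor product.
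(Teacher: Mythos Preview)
Your framework for the first statement is on the right track and matches the paper's: build $\bar E$ as an orthogonal sum $\bar Q\perp\bar L$ with $\bar Q$ nef of small negative degree and $\bar L$ a compensating positive line bundle; since orthogonal sums of nef lattices are nef, $\bar E$ is nef of degree zero but visibly unstable. The gap is that you never actually construct $\bar Q$. You write that a ``sufficiently generic lattice'' or an unspecified ``explicit lattice over a small number field'' should work, but the entire difficulty is verifying that \emph{every} rank-one quotient of $\bar Q_{{\frak o}_{K'}}$ has nonnegative degree for \emph{every} finite extension $K'/K$. Genericity and Minkowski/Hermite bounds over $K$ alone do not see finite extensions and cannot supply this. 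The paper settles it by an explicit choice and a bare-hands check: $\bar Q=\mathbb{A}_2\langle\lambda\rangle$ for $\lambda\in[\tfrac12\log\tfrac32,\tfrac{\log 3}{4}[$, and for $\ell=ae_1+be_2$ with $a,b\in{\frak o}_{K'}$ one bounds $\prod_\sigma\|\sigma(\ell)\|^2$ from below using the product formula via $\prod_\sigma|\sigma(ab)|\geq 1$. That computation is the substance of the proof, and your proposal contains no mechanism to replace it.

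For the second statement your argument has a genuine flaw. You assert that ``the essential minimum is only submultiplicative, $e_1\bigl((\bar Q\otimes\bar Q')^\vee\bigr)<e_1(\bar Q^\vee)\,e_1(\bar Q'^\vee)$, with a loss of Faltings-height type'' that can be tuned below $1$. There is no such general strict inequality: pure tensors give only $e_1(\bar A\otimes\bar B)\leq e_1(\bar A)\,e_1(\bar B)$ with no quantified gap, and producing a vector in a tensor product strictly shorter than every pure tensor is exactly the hard, example-dependent phenomenon you need to exhibit, not assume. The paper's device is specific and different: take $\bar E$ integral \emph{unimodular} (the indecomposable rank-$3$ hermitian lattice over $\Z[\tfrac{1+\sqrt{-7}}{2}]$), so that $\bar E\cong\bar E^\vee$ and $\bar E^{\otimes 2}\cong\bar E\otimes\bar E^\vee$ contains the \emph{identity} vector, of Hilbert--Schmidt length $\sqrt{\rk\bar E}=\sqrt 3$. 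Dually this produces, for $\bar E\langle-\lambda\rangle^{\otimes 2}$, a rank-one quotient of degree $-2\lambda+\log 3<0$ once $\lambda>\tfrac{\log 3}{2}$. One then verifies (again by explicit product-formula estimates over all $K'$) that $\bar E\langle-\lambda\rangle$ is nef for $\lambda\leq\log 3-\tfrac23\log 2$, and the degree-zero example is $\bar E\langle-\lambda\rangle\perp(\bar E\langle-\lambda\rangle)^\vee$. The short vector in the tensor product is this canonical one; your proposal does not identify any such vector.
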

  
  This puts some limitation to the geometric-arithmetic analogy which is the leading thread of Arakelov theory\footnote{another such failure of the analogy: for vector bundles, semistability is an {\it open} condition, whereas it is a {\it closed} condition for hermitian lattices (as the above figure illustrates in the case of euclidean lattices of rank two).}. On the other hand, this also shows, in our opinion, that Bost's conjecture (if true) lies beyond this analogy.
  
  \medskip 
 \noindent  ${\bf 0.6.}$ After having declined the argument in three concrete contexts, its formalization in the most general categorical setting becomes transparent, and can be further concretised in other contexts. 
  
   \medskip 
 \noindent  ${\bf 0.7.}$ Vector bundles on a curve defined over a finite field and hermitian lattices can both be described as {\it adelic vector bundles}, \ie  finite-dimensional $K$-vector spaces endowed with a suitable collection of norms, \cf \cite{Gau}\cite{HoJS}. 
 
 We introduce the closely related notion of {\it generalized vector bundle}, and notions of slope and semistability for them, which allows to account as well for vector bundles on a curve defined over an arbitrary field $k$, and also for finite-dimensional $K$-vector spaces $M$ endowed with finitely many decreasing  filtrations  $F^{\geq .}_{\nu}$ (possibly defined over a finite separable extension $L/K$) \cite{F}\cite{Ra}.  
 
 Semistability of multifiltered spaces plays a role in the theory of diophantine approximations and in  $p$-adic Hodge theory. 
     Tensor products of multifiltered spaces are semistable (in the terminology of \cite{A}, the Faltings-Rapoport slope filtration is $\otimes$-multiplicative); equivalently:
  
  \begin{theorem}  For any finite family of nonzero multifiltered spaces $\bar M_i = ( M_i, F^{\geq .}_{i,\nu})$, one has    \begin{equation}\label{0.4.1}\mu_{\max}(\otimes   \bar M_i )= \sum \mu_{\max}(  \bar M_i ).\end{equation}
  \end{theorem}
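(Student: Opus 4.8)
The plan is to run the ``{nef}'' argument --- the one used above for vector bundles and then for hermitian lattices --- in the category of multifiltered spaces. As always, only the inequality $\mu_{\max}(\otimes\bar M_i)\le\sum_i\mu_{\max}(\bar M_i)$ needs proof: the reverse inequality follows from the additivity $\mu(\otimes\bar M_i)=\sum_i\mu(\bar M_i)$ --- itself a consequence of the additivity of the degree in short exact sequences together with the fact that the tensor-product filtration is the convolution of the factor filtrations --- applied to the maximal destabilizing subobjects. To prove the inequality I would first reduce to two factors by the obvious induction, then to the case where $\bar M_1,\bar M_2$ are semistable by Harder--Narasimhan dévissage (if $\bar M_1$ is a successive extension of its semistable HN-quotients $\bar G_a$, then $\bar M_1\otimes\bar M_2$ is a successive extension of the $\bar G_a\otimes\bar M_2$, and a weighted-average argument, together with the semistable case, yields $\mu_{\max}(\bar M_1\otimes\bar M_2)=\mu_{\max}(\bar M_1)+\mu_{\max}(\bar M_2)$; likewise on the right-hand side). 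Finally, tensoring with a rank-one multifiltered space whose jumps are suitable rational numbers, I would reduce to the case where $\bar M_1$ and $\bar M_2$ are semistable of slope $0$ --- i.e.\ ``{nef}'', by the evident analogue of Proposition 0.4, which here is immediate from $\mu_{\min}\le\mu$ --- so that it remains to show that $\bar M_1\otimes\bar M_2$ is then semistable of slope $0$, equivalently $\mu_{\min}(\bar M_1\otimes\bar M_2)\ge 0$, i.e.\ that every quotient $\bar Q$ of $\bar M_1\otimes\bar M_2$ satisfies $\mu(\bar Q)\ge 0$.

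The engine is the tensor--Hom adjunction, combined with the comparison between a multifiltered space $\bar M$ and the tautological rank-one object $\sO_{\mathbb P(\bar M)}(1)$ on its flag space, exactly as in the vector bundle case. Given a strict surjection $\bar M_1\otimes\bar M_2\twoheadrightarrow\bar Q$, the adjoint map $\bar M_1\to\bar M_2^\vee\otimes\bar Q$ (internal Hom from $\bar M_2$ to $\bar Q$) is a nonzero morphism of multifiltered spaces; compatibility of the adjunction with the filtrations on tensor products, duals and internal Hom is routine. Its image $\bar I$ carries two filtrations: the one it receives as a quotient of $\bar M_1$ and the one induced as a subobject of $\bar M_2^\vee\otimes\bar Q$; since $g$ is a morphism the first has every step contained in the corresponding step of the second, whence $0=\mu_{\min}(\bar M_1)\le\mu(\bar I_{\rm quot})\le\mu(\bar I_{\rm sub})\le\mu_{\max}(\bar M_2^\vee\otimes\bar Q)$, so $\mu_{\max}(\bar M_2^\vee\otimes\bar Q)\ge 0$. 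The $\sO_{\mathbb P(\bar M)}(1)$-comparison is then used to replace a hypothetical destabilizing quotient (one with $\mu(\bar Q)<0$), after extension of the base field, by a \emph{rank-one} quotient of negative slope --- this is the step which, in the arithmetic setting, produced the Faltings height of $\mathbb P^{r-1}_K$, but for multifiltered spaces one expects it to be exact. Once $\bar Q$ has rank one, $\bar M_2^\vee\otimes\bar Q$ is merely $\bar M_2^\vee$ twisted by a rank-one object, hence semistable of slope $\mu(\bar Q)<0$, so $\mu_{\max}(\bar M_2^\vee\otimes\bar Q)=\mu(\bar Q)<0$ --- contradicting the inequality just displayed.

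The one non-formal ingredient, and thus the main obstacle, is precisely this $\sO_{\mathbb P(\bar M)}(1)$-comparison: one must pin down the right ``flag space'' attached to a multifiltered space within the framework of generalized vector bundles, equip it with a tautological rank-one object whose positivity reads off $\mu_{\min}(\bar M)$, and verify that passage to it turns a higher-rank destabilizing quotient of $\bar M_1\otimes\bar M_2$ into a rank-one one over a larger base. The base extension is genuinely necessary --- a semistable multifiltered space need not itself possess a rank-one quotient of minimal slope --- and it plays here the role of passage to a finite covering of the curve $S$ in the vector bundle case. Everything else (the reductions, the dévissage, the adjunction, the behaviour of slopes under twisting by rank-one objects) is formal and carries over verbatim. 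I expect that, carried out with care, the comparison is exact; this is what yields an equality and not merely an inequality --- in contrast to the hermitian case, and in accordance with the $\otimes$-multiplicativity of the Faltings--Rapoport slope filtration asserted in the statement.
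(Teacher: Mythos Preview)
Your overall strategy is the paper's strategy, but you leave the one substantive step as a conjecture: you say that the $\sO_{\mathbb P(\bar M)}(1)$-comparison ``is precisely the main obstacle'' and that you ``expect that, carried out with care, the comparison is exact''. That is the whole content of the theorem; everything else in your outline (reduction to two factors, the adjunction $\bar M_1\to\bar M_2^\vee\otimes\bar Q$, the slope inequalities for image and coimage) is already packaged in the paper's Lemma~\ref{lemmanu}. What remains is exactly inequality~\eqref{rhonu} with $\rho=0$, i.e.\ $\mu(\bar M)\le\nu(\bar M)$: every multifiltered space admits a rank-one subobject of slope at least $\mu(\bar M)$ (dually, a rank-one quotient of slope at most $\mu(\bar M)$). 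You do not prove this.

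The paper's point is that for multifiltered spaces this step is completely elementary and requires neither a flag-space construction nor any base extension. One first checks (by additivity of degree in short exact sequences) the formula
\[
\mu(\bar M)=\frac{1}{\dim M}\sum_{\lambda_1,\ldots,\lambda_n}(\lambda_1+\cdots+\lambda_n)\dim_K \gr_{F_1}^{\lambda_1}\cdots\gr_{F_n}^{\lambda_n}\bar M,
\]
then chooses $(\lambda_1,\ldots,\lambda_n)$ with $\lambda_1+\cdots+\lambda_n$ maximal subject to the iterated graded piece being nonzero, and lifts any nonzero vector of that graded piece to $m\in F_1^{\ge\lambda_1}M\cap\cdots\cap F_n^{\ge\lambda_n}M$. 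The line $\bar L=Km$ then satisfies $\mu(\bar L)\ge\lambda_1+\cdots+\lambda_n\ge\mu(\bar M)$, so $\nu(\bar M)\ge\mu(\bar M)$. This replaces your speculative $\sO_{\mathbb P(\bar M)}(1)$-step entirely, and incidentally shows that your assertion ``the base extension is genuinely necessary'' is false here: the rank-one object lives over $K$. (Your preliminary reductions via Harder--Narasimhan d\'evissage and twisting to slope zero are also unnecessary: once $\mu\le\nu$ is known, \eqref{mumaxrho} with $\rho=0$ gives the inequality directly.)
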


   There are three known proofs. 
  In \cite{FW}, G. Faltings and G. W\"{u}stholz relate multifiltered spaces to vector bundles on curves, as follows (when ${\rm char}\,K=0$ and when the breaks of the filtration are rational)\footnote{let $S$ be a cyclic covering of ${\mathbb P}^1$, totally ramified above at least $n[L$:$K]$ branch points. To $\bar M$, they associate a vector bundle on $S$ of rank $\dim M$ and slope  $[L$:$K]\mu(\bar M)$, which is semistable if $\bar M$ is - and conversely, provided the degree of the covering $S/{\mathbb P}^1$ is large enough.  The result thus follows from the case of vector bundles (Theorem 0.3).}.
 
 Another proof sketched in \cite{F} (for $L=K$) uses a Rees module construction and a deformation argument due to G. Laffaille.
 
 A third one uses geometric invariant theory and Kempf filtrations \cite{To1}.

  \smallskip We give a completely elementary new proof of Theorem 0.7, valid in any characteristic, which is inspired by our quick proof of Proposition 0.2. This answers a question of G. Faltings \cite{F}.

 \section{A quick and elementary proof of Proposition 0.2} 
 
 \subsection{} Let us start with a couple of general remarks about euclidean lattices. First of all, they form a {\it category}, a fact which seems to be ostensibly ignored in the literature on euclidean lattices: morphism are linear maps of norm $\leq 1$ (we use here the operator norm, \ie the maximum of the norm of value of the map on the unit ball). Isomorphisms are isometries. 
 
 Of course this category is not (pre)additive. It has finite coproducts (orthogonal sums) but no finite products in general (the diagonal map $\Z\to \Z \perp \Z$ has norm $\sqrt 2 >1$). 
 Nevertheless, this category has kernels and cokernels, and subquotients behave nicely (in the terminology of \cite{A}, it is proto-abelian). One has the notion of short exact sequence  $0\to \bar E'\to \bar E\to \bar E''\to 0$:  namely, a short exact of abelian groups, the euclidean norm on $E'_\R$ (\resp $E''_\R$) being induced by (\resp  quotient of) the norm of $E_\R$.

Moreover, it is a symmetric monoidal category with respect to the natural tensor product $\otimes$. 

The dual of an euclidean lattice $\bar E = (E, \vert \vert \; \vert \vert_{E_\R})$ is  $\bar E^\vee= (E^\vee= Hom(E, \Z), \vert\vert\; \vert\vert_{E^\vee_\R})$. The dual is contravariant (a morphism and its transpose have the same norm). Note however that the standard evaluation map $\bar E\otimes  \bar E^\vee \to \Z$  has norm $\sqrt{ \rk E}$, hence is not a morphism of euclidean lattices if $\rk E>1$\footnote{in this respect, it was abusive to write in \cite[12.1]{A} that the monoidal category of euclidean lattices is ``rigid".}. 

Any morphism 
 $f: \Z \to \bar E_1\otimes \bar E_2$ (\ie any vector of norm $\leq 1$ in $E_1\otimes E_2$) gives rise to a morphism $f': \bar E_2^\vee\to \bar E_1$: indeed, in the canonical identification $E_{1,\R}\otimes E_{2,\R} \cong Hom_\R(E_{2,\R}^\vee, E_{1,\R})$, the norm of the left-hand side corresponds to the Hilbert-Schmidt norm on the right-hand side, which is $\geq$ the operator norm. Note that $f\mapsto f'$ is injective (and functorial in $E_1, E_2$), but not surjective in general. 
 
 Any euclidean lattice $\bar L$ of rank one is invertible with respect to $\otimes$, with inverse $\bar L^\vee$.
  
 \subsection{}  In order to prove Proposition \ref{prop02}, it is enough to take $i\in \{1,2\}$. On multiplying the euclidean norms by suitable constants, we may assume that the maximal volume of nonzero sublattices of   $\bar E_i$ is $1$, \ie $\mu_{\max}(E_i)\leq 0$. 
 
  Let $\bar E$ be a nonzero sublattice of $\bar E_{1 }\otimes \bar E_{2 }$. Let $r$ be its rank. It is enough to show that $\mu(\bar E)\leq   \frac{1}{2}\log r$, \ie that the volume of $\bar E$ is at least $r^{-r/2}$.

 Any euclidean sublattice $\bar L$ of $\bar E_{1 }\otimes \bar E_{2 }$ of rank one gives rise to a nonzero morphism    
   $ f':\, \bar E_{2}^\vee \to   L^\vee \otimes  \bar E_{1} $. By our normalization of $\bar E_i$, and by duality, 
   any quotient of $ \bar E_{2}^\vee$ (with quotient norm) has volume $\leq 1$; and any (not necessarily saturated) sublattice of $\bar L^\vee \otimes  \bar E_{1} $, with induced norm, has volume $\geq 1/{\rm vol}\, \bar L$. Factorizing the map $f'$ of norm $\leq 1$ through the quotient by its kernel, one gets that ${\rm vol}\, \bar L \geq 1$.
    
   Taking $\bar L\subset \bar E$, one gets that any nonzero vector in  $  \bar E$ has length $\geq 1$. 
     By Minkovski's theorem,  this implies that ${\rm vol}\, \bar E \geq 2^{-r}.v_r$ (where $v_r$ denotes the volume of the unit ball in $\mathbb R^r$). One concludes by noting that $2^{-r}.v_r\geq r^{-r/2}$, since the unit ball contains the hypercube of side $2r^{-1/2}$ centered at the origin. \qed
   
   \begin{remark} Instead of invoking Minkovski's theorem, one could simply bound $({\rm vol}\, \bar E)^{-2/r}$ from above by the Hermite constant  $\gamma_r$ (which is the supremum over euclidean lattices $\bar E$ of rank $r$ of the quantity $N(\bar E). {\rm vol}(\bar E)^{-2/r},$  where $N(\bar E) $ stands for the square of the length of a shortest vector).
   
   The bound $\mu(\bar E)\leq   \frac{1}{2}\log \gamma_r$ is significantly better than $\mu(\bar E)\leq   \frac{1}{2}\log r$ in small rank $r$, where $\gamma_r$ is explicitly known (this has been exploited in \cite{DSP}\cite{Za}). But this is not significative when $r\to \infty$, as $\log \gamma_r\sim \log r$.
     \end{remark}

\section{Tensor product of semistable vector bundles on a curve (proof of Theorem 0.3)}

\subsection{}  Let us first recall some basic facts about numerical effectivity (\cf \cite[ch. 6]{Laz}). Let $S$ be a projective smooth curve over an algebraically closed field $k$. Let $E$ be a vector bundle of finite rank $r$ on $S$. Recall that $E$ is said to be {\it {nef}}  if for any finite surjective morphism $S'\to S$, any  quotient line bundle $\,L\,$ of the pull-back $E' = E_{S'}$ has nonnegative degree. By normalization, it is enough to consider smooth curves $S'$.  

 It is clear that any quotient of a {nef} vector bundle $E$ on $S$ is {nef}.

\smallskip A pair $(S'/S, L)$ as above corresponds to a finite morphism $S'\to  {\mathbb P}(E)$ such that $L$ is the pull-back of $\sO_{{\mathbb P}(E)}(1)$. Therefore, $E$ is {nef} if and only if $\sO_{{\mathbb P}(E)}(1)$ is {nef} on ${\mathbb P}(E)$ in the sense that its inverse image on any curve $S'$ has nonnegative degree.

According to a fundamental result of S. Kleiman \cite{Kl} (which relies on the theory of ample line bundles), any {nef} line bundle $\sL$ on a projective variety $X$ of dimension $r$ satisfies  $ c_1(  \sL_{Y})^{\dim Y}\geq 0$ for any closed subvariety $Y$ of $X$, and in particular
 \begin{equation}\label{Kl} c_1(\sL)^{\cdot r}\geq 0  . \end{equation}
On the other hand, since $\dim S=1$, one has 
   \begin{equation}\label{eq1} \deg E  = c_1(\sO_{{\mathbb P}(E)}(1))^{\cdot r}, \end{equation} whence the well-known
   \begin{lemma}[Kleiman]\label{klei} If $E$ is {nef} on $S$, then $\, \deg E\geq 0 .\;\;\square$\end{lemma}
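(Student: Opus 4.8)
The plan is simply to chain together the three facts already assembled just above. First I would invoke the observation made in the text that $E$ is {nef} on $S$ if and only if the tautological line bundle $\sO_{{\mathbb P}(E)}(1)$ is {nef} on the projective bundle $X={\mathbb P}(E)$. The point to note here is that, since $S$ is a curve and the fibres of ${\mathbb P}(E)\to S$ are projective spaces of dimension $r-1$, the variety $X$ has dimension exactly $r=\rk E$; this is what makes the exponent $r$ the top degree on $X$.

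Next I would apply Kleiman's inequality \eqref{Kl} to the {nef} line bundle $\sL=\sO_{{\mathbb P}(E)}(1)$ on the $r$-dimensional variety $X={\mathbb P}(E)$, obtaining $c_1(\sO_{{\mathbb P}(E)}(1))^{\cdot r}\geq 0$. Finally I would use the projective-bundle (Grothendieck) relation \eqref{eq1} — which holds precisely because $\dim S=1$ — to identify the left-hand side with $\deg E$, so that $\deg E\geq 0$, as claimed.

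With all the ingredients in place, I do not expect a genuine obstacle: the argument is a one-line combination of \eqref{Kl} and \eqref{eq1} via the {nef}-ness equivalence. The only points deserving a moment's care are the dimension count $\dim{\mathbb P}(E)=r$ (so that Kleiman's criterion is applied in the correct top degree) and the exact form of the relation \eqref{eq1}; both are classical. If one wished to bypass the full strength of Kleiman's theorem one could instead represent the class $c_1(\sO_{{\mathbb P}(E)}(1))^{\cdot(r-1)}$ by a curve $C\subset X$ (using that a {nef} class is a limit of ample ones, or a moving-lemma argument) and conclude from $\deg E=\deg\bigl(\sO_{{\mathbb P}(E)}(1)|_C\bigr)\geq 0$, but citing \eqref{Kl} directly is cleaner and is what I would do.
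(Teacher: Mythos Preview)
Your proposal is correct and follows exactly the paper's approach: the lemma is stated with a $\square$ immediately after the preceding discussion, precisely because it is the one-line combination of the nef-ness equivalence, Kleiman's inequality \eqref{Kl}, and the identity \eqref{eq1} that you describe. Your added remark on the dimension count $\dim{\mathbb P}(E)=r$ and the alternative curve-representation argument are fine elaborations, but not needed.
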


\subsection{}  If the degree of the covering $S'/S$ is $d$, then the degree of the pull-back $E'=E_{S'}$ is
\begin{equation}\label{muvb} \deg \, E'= d\cdot \deg \, E, \end{equation}
that is, $\mu(E') = d\mu(E)$, whence  $ \mu_{\max}( E') \geq  d \cdot \mu_{\max}(E) $ (where $\mu_{\max}$  denotes the maximum among the slopes of subbundles, or equivalently, among the slopes of coherent subsheaves). 

  {\it If $\,{\rm char}\, k=0$}, this is an equality:
 \begin{equation}\label{mumaxvb} \mu_{\max}( E')= d\cdot \mu_{\max}(E)_  , \end{equation}
as one sees by Galois descent of the (unique) subbundle  of $E'$ of maximal rank with maximal slope  (in fact, the Harder-Narasimhan filtration of $E'$ is the pull-back of the Harder-Narasimhan filtration of $E$).

\subsection{}  In the proof of Theorem 0.3, equation \eqref{mumaxvb} allows to replace $S$ by any finite covering (with $S'$ smooth). On the other hand it is enough to take $i\in \{1,2\}$ and to establish the upper bound for $\mu_{\max}(E_1\otimes E_2)$, and one may twist $E_i$ by any line bundle. Replacing $S$ by $S'$ finite over $S$ of degree divisible by the ranks $r_i$ of $E_i$ and twisting $E_i$ by suitable line bundles, we may assume that the maximal degree of coherent subsheaves of $E_i$ is $0$, for $i=1,2$ (\ie, that $\mu_{\max}(E_i)=0$). In particular $E_1^\vee$ and $E_2^\vee$ are {nef} (the trick of passing to a finite covering avoids the use of $\Q$-divisors).
  We then have to show that any nonzero subbundle  $ E$  of $  E_{1 }\otimes   E_{2 }$ has nonpositive degree. 
 
 Let $S'/S$ be any finite covering (with $S'$ smooth), and let  $E_i'$ denote the pull-back of $E_i$ on $S'$. Any line subbundle $ L$ of $  E'_{1 }\otimes E'_{2 }$ of rank one gives rise to a nonzero morphism    
   $ f':\, (E'_{2})^\vee \to   L^\vee \otimes   E'_{1} $. By our normalization of $  E_i$ and \eqref{mumaxvb}, and by duality, 
   any quotient of $( E'_{2})^\vee$  has nonnegative degree; and any coherent subsheaf of $ L^\vee \otimes    E'_{1} $  has degree $\leq   \deg L^\vee$.  Factorizing $f'$  through the quotient by its kernel, one gets that $L^\vee$ has nonnegative degree. 
   
   This shows that $( E_{1 }\otimes E_{2 })^\vee$ is {nef}, and so is its quotient $E^\vee$. It follows from  Lemma \ref{klei} that $\deg E\leq 0$. \qed

 \smallskip \begin{remark} It follows from the lemma that a vector bundle of degree $0$ is {nef} if and only if it is semistable. More generally, a vector bundle is {nef} if and only if all of its slopes (\ie the breaks of the Harder-Narasimhan filtration) are nonnegative (R. Hartshorne).
 
\smallskip \noindent   The above proof of Theorem 0.3 does not use the fact that ``{nef} $\otimes$ {nef} is {nef}; rather, in characteristic $0$, this fact may be viewed as a consequence of Theorem 0.3. This point will be important for our arithmetic paraphrase of this proof in the next section.
  \end{remark}

   \section{Tensor product of semistable hermitian lattices (proof of Theorem 0.5)} 
   
   \subsection{}  We will transfer as closely as possible the lines of the above proof in the arithmetic setting. 
Let us first mention that our categorical comments on euclidean lattices in subsection 1.1 extend verbatim to hermitian ${\frak o}_K$-lattices (for any number field $K$). 

A {\it hermitian ${\frak o}_K$-lattice} $\bar E $ is a projective ${\frak o}_K$-module $E$ of finite rank endowed, for each archimedean place $v$ of $K$, with a positive quadratic (\resp hermitian) form on  the real (\resp complex) vector space $E\otimes_{{\frak o}_K} K_v$ (we adopt the convention that the hermitian scalar product is left antilinear).

For any ${\lambda}\in \R$, and any hermitian lattice $\bar E$ of rank $r$, we denote by $\bar E\langle{{\lambda}}\rangle$ the hermitian lattice obtained from $\bar E$ by multiplying all norms by $\, e^{-{\lambda}/[K:\Q]}$.  
The 
alternate products ${\rm Alt}^p \bar E$ are the usual ones at the level of ${\frak o}_K$-lattices, with hermitian  products defined by the formula   $\langle v_1 \wedge \cdots \wedge v_p, w_1\wedge \cdots \wedge w_p\rangle = det (\langle v_i,  w_j\rangle)$, and  $\det \bar E =  {\rm Alt}^r \bar E$.  The natural map $ E^{\otimes p}\to  {\rm Alt}^p   E $   has norm $\sqrt{p!}$, hence is not a morphism of hermitian lattices if $p>1$.

The degree of a hermitian lattice of rank one $\bar L$ is 
   \begin{equation} \widehat{\deg}\, \bar L = \log \sharp (L/{\frak o}_K \ell)-  {\epsilon_v} \sum_{v\in V_\infty}\log \vert\vert \ell\vert\vert_v \end{equation}  where $\ell$ is any nonzero vector in $L$, and $\epsilon_v$ is $1$ or $2$ according to whether it is real or not.
 
 The (arithmetic) {\it degree} of a hermitian lattice $\bar E$ of any rank  is
    \begin{equation}    \widehat{\deg}\, \bar E =  \widehat{\deg}\,\det \bar E .\end{equation}
    It follows from this formula that
   $$\widehat {\deg}\, \bar E^\vee= -\widehat {\deg}\, \bar E,\;\;\;\;\widehat{\deg}\, \bar E\langle{{\lambda}}\rangle =  \widehat{\deg}\, \bar E+ r{\lambda},$$
      that the degree is additive with respect to short exact sequences, and that the associated slope function $\mu= \widehat {\deg}  / \rk $ is additive with respect to tensor products.

\subsection{} Let $X$ be an integral projective scheme of dimension $r$, flat over $S= \Spec {\frak o}_K$. One has the notion of {\it {nef}} $C^\infty$-hermitian line bundle on $X$:
        $\bar \sL$ is {nef} if the restriction of $\sL$ to any fiber of $X/S$ is {nef} (in the algebro-geometric sense), if $c_1(\bar \sL)$ is a semipositive current on $X(\C)$ (for any complex point of $S$), and if moreover $\hat c_1(\bar \sL_{S'}) > 0$ for any integral subscheme $S'$ of $X$ which is finite and flat over $S$, \cf \cite[\S 2]{Mo1} (and \cite[\S 1]{Z} for the notions of semipositive current and smooth hermitian line bundle on a singular complex variety). 
             
   According to a fundamental result of S. Zhang \cite{Z} (which relies on his theory of ample hermitian line bundles), one can replace the latter condition by: $\hat c_1(\bar \sL_{Y})^{\dim Y}\geq 0$ for any integral subscheme $Y$ of $X$ which is flat over $S$. In particular (\cite[Lemma 5.4]{Z})
\begin{equation}\label{Zh} \hat c_1(\bar\sL)^{\cdot r}\geq 0  . \end{equation}

   \smallskip Let $\bar E$ be a hermitian lattice of rank $r$, viewed as a hermitian vector bundle on $S$. We say that $\bar E$ is {\it {nef}} if for any finite extension $K'/K$,  any rank one quotient $\,\bar L\,$ of the pull-back $\bar E' = \bar E_{S'}$ (with  $S'= \Spec {\frak o}_{K'}$) has nonnegative (arithmetic) degree.  
      
  Since the restriction of  $\sO_{{\mathbb P}(E)}(1)$ to any fiber is certainly is {nef}, and $c_1(\bar  \sO_{{\mathbb P}(\bar E)}(1))$ is a semipositive current on ${\mathbb P}(E)(\C)$, one sees that the hermitian lattice $\bar E$ is {nef} if and only if the hermitian line bundle $\bar \sL =\sO_{{\mathbb P}(\bar E)}(1)$ on $X= {\mathbb P}(\bar E)$  is {nef}.
  
 \begin{remark}\label{r3.1} 1) In order to check that $\bar E$ is nef, it is enough to check that any rank one {\it free} quotient of $ \bar E_{S'}$ has nonnegative degree. Indeed, any rank one quotient $\,\bar L\,$ of  $ \bar E_{S'}$ becomes free after pulling back to  $ \Spec {\frak o}_{K''}$ for a suitable extension $K''/K'$ (\eg the Hilbert class field of $K'$). 
 
 \smallskip \noindent 2) The orthogonal sum of nef hermitian lattices is nef. Indeed let $\bar L$ be a rank one quotient of $\bar E'_1 \perp \bar E'_2 $. The restriction of the quotient morphism to $\bar E'_i$ is nonzero for $i=1$ or $2$ (say $i=1$). Let $\bar L'$ be its image. Then $\widehat {\deg}\, \bar L\geq \widehat {\deg}\, \bar L' \geq 0$ since $\bar E_1$ is nef.
 \end{remark}

\smallskip We now come to the point where the strict parallel with the geometric case breaks down: namely \eqref{eq1} is no longer true. In fact, the quantity 
  \begin{equation}\label{eq1.5}\hat c_1(\sO_{{\mathbb P}(\bar E)}(1))^{\cdot r}\geq 0
   \end{equation} 
    is by definition the (nonnegative) Faltings height of ${\mathbb P}(\bar E)$ in the sense \cite{BGS} up to a factor $[K:\Q]$, and one has the formula  
    (\loccit (4.1.4)):
   \begin{equation}\label{eq2}
       \widehat\deg \,\bar E =  \hat c_1(\sO_{{\mathbb P}(\bar E)}(1))^{\cdot r} - \frac{  [K:\Q]r}{2}   \sum_{m=2}^{r} \frac{1}{m}   \geq  \hat c_1(\sO_{{\mathbb P}(\bar E)}(1))^{\cdot r} -  \frac{ [K:\Q]}{2} r\log r, 
      \end{equation} 
(beware the notations: in \cite{BGS}, ${\mathbb P}(E^\vee)$ stands for what we denote by ${\mathbb P}(E)$ following A. Grothendieck; this explains the sign difference between \eqref{eq2} and the formula  \loccit).  Whence the 

  \begin{lemma}\label{zhan} If $\bar E$ is {nef} on $S$, then $\, \widehat\deg \, \bar E\geq -  \frac{  [K:\Q]}{2} r\log r .\;\;\;\;\;\;\square$\end{lemma}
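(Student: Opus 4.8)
*If $\bar E$ is {nef} on $S$, then $\, \widehat\deg \, \bar E\geq -  \frac{  [K:\Q]}{2} r\log r$.*The plan is to read off the inequality directly from the two facts already assembled: the identification of nefness of $\bar E$ with nefness of $\bar\sL = \sO_{\mathbb P(\bar E)}(1)$ on $X = \mathbb P(\bar E)$, and Zhang's arithmetic Kleiman-type theorem. First I would invoke the equivalence established just above: since $\bar E$ is {nef}, the $C^\infty$-hermitian line bundle $\bar\sL = \sO_{\mathbb P(\bar E)}(1)$ on $X = \mathbb P(\bar E)$ is {nef} in the sense of \cite[\S 2]{Mo1} (the restriction to each fiber is {nef} in the algebro-geometric sense, $c_1(\bar\sL)$ is a semipositive current on $X(\C)$, and the positivity condition on integral subschemes finite and flat over $S$ is exactly the defining property of a {nef} hermitian lattice after Remark \ref{r3.1}).

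Next I would apply Zhang's result, in the form recorded as \eqref{Zh} (equivalently \eqref{eq1.5}): a {nef} $C^\infty$-hermitian line bundle on an integral projective scheme of dimension $r$ flat over $S$ satisfies $\hat c_1(\bar\sL)^{\cdot r}\geq 0$. Here $X = \mathbb P(\bar E)$ has relative dimension $r-1$ over $S = \Spec\,\frak o_K$, hence absolute dimension $r$, so this applies and gives $\hat c_1(\sO_{\mathbb P(\bar E)}(1))^{\cdot r}\geq 0$.

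Finally I would feed this into the comparison formula \eqref{eq2}, which expresses $\widehat\deg\,\bar E$ as $\hat c_1(\sO_{\mathbb P(\bar E)}(1))^{\cdot r}$ minus the Faltings height correction $\frac{[K:\Q]r}{2}\sum_{m=2}^r \frac1m$. Combining with the elementary estimate $\sum_{m=2}^r \frac1m \leq \log r$ (comparison with $\int_1^r dt/t$), one obtains
\[
\widehat\deg\,\bar E \;=\; \hat c_1(\sO_{\mathbb P(\bar E)}(1))^{\cdot r} - \frac{[K:\Q]r}{2}\sum_{m=2}^r\frac1m \;\geq\; 0 - \frac{[K:\Q]}{2}\,r\log r,
\]
which is the asserted bound. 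The only genuinely hard input is Zhang's theorem, which we are taking as given; everything else is a formal substitution, so I expect no real obstacle in assembling the argument. The one point to double-check is simply that the relative dimension bookkeeping makes $\dim X = r$ (not $r-1$), so that \eqref{Zh} is applied in the right degree — but this is immediate since $X$ is flat over the arithmetic curve $S$.
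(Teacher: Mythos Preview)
Your proof is correct and follows exactly the paper's own argument: the lemma is stated with a $\square$ precisely because it is an immediate consequence of the three ingredients you identify---the equivalence ``$\bar E$ nef $\Leftrightarrow \sO_{\mathbb P(\bar E)}(1)$ nef'', Zhang's inequality \eqref{Zh}, and the comparison formula \eqref{eq2} together with the harmonic-sum bound $\sum_{m=2}^r 1/m \leq \log r$. There is nothing to add.
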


\subsection{}  If the degree of the covering $S'/S$ is $d= [K':K]$, then the degree of the pull-back $\bar E'=\bar E_{S'}$ is
\begin{equation}\label{muvb} \widehat\deg \,\bar E'= d\cdot  \widehat\deg \,\bar E, \end{equation}
that is, $\mu(\bar E') = d\mu(\bar E),$ whence  $ \mu_{\max}(\bar E') \geq  d \cdot \mu_{\max}(\bar E) $ (where $\mu_{\max}$  denotes the maximum among the slopes of sulattices, or equivalently, among the slopes of saturated sublattices). In fact
  \begin{equation}\label{mumaxvb} \mu_{\max}(\bar E')= d\cdot \mu_{\max}(\bar E)_  , \end{equation}
as one sees by Galois descent of the (unique) sublattice  of $\bar E'$ of maximal rank with maximal slope  (in fact,  the Stuhler-Grayson slope filtration of $\bar E'$ is the pull-back of the Stuhler-Grayson slope filtration of $\bar E$). On the other hand
\begin{equation}\label{tw} \widehat\deg \,(\bar E\langle \lambda\rangle)'= \widehat\deg \,\bar E'\langle d\lambda\rangle .\end{equation}

\subsection{}  In the proof of Theorem 0.5, it is enough to take $i\in \{1,2\}$, and one may replace $\bar E_i$ by $\bar E_i\langle{{\lambda}_i}\rangle$ for any constant ${\lambda}_i$. We may thus assume that the maximal degree of hermitian sublattices of $\bar E_i$ is $0$, for $i=1,2$ (\ie that $\mu_{\max}(\bar E_i)=0$). In particular $\bar E_1^\vee$ and $\bar E_2^\vee$ are {nef}.
  It is then enough  to show that for any nonzero sublattice  $ \bar E\subset   \bar E_{1 }\otimes   \bar E_{2 }$ of rank $r$,  $\widehat \deg E \leq \frac{ [K:\Q]r}{2} \log r $. 
  
  The argument is strictly parallel to the one given above in the geometric case, except that Lemma \ref{zhan} replaces Lemma \ref{klei}. \qed

   \begin{remark}  Lemma \ref{zhan} also follows from the ``absolute Siegel lemma" \cite{RT}\cite{P}: for any $\epsilon >0$, there exists an extension $K'/K$ and rank one quotients   $\bar L_i, \,( i= 1, \ldots, r)$ of $\bar E_{S'}$ which span $E^\vee_{K'}$, with
      $$\frac{1}{ [K':K]}\sum_1^r\, \widehat\deg(\bar L_i) \leq  \widehat\deg (\bar E) +  \frac{[K:\Q]r}{2}\log r + \epsilon .$$ If $\bar E$ is {nef}, the left-hand side is nonnegative.
      
      \smallskip In connection with Lemma \ref{zhan}, let us also mention the following theorem of N. Hoffmann \cite{Ho}: for any semistable hermitian lattice $\bar E_1$ (for instance ${\frak o}_K$), there is a hermitian lattice $\bar E_2$ of rank $r$ such that $\bar E_1\otimes \bar E_2$ has no rank one quotient of   negative degree, and
      $$\widehat\deg (\bar E_1\otimes \bar E_2) <   - \frac{[K:\Q]r}{2}(  \log r  -\log 2e\pi) - \frac{r\log \mathfrak{d}_K}{2}.$$ 
      \end{remark}

      \subsection{Questions} $1)$ Is it true that the exterior and symmetric powers of a semistable hermitian lattice are semistable? 
      
  \smallskip\noindent $2)$   Is it true that the tensor product of polystable hermitian lattices (= orthogonal sum of stable hermitian lattices of the same slope) is polystable?
      
      Note: the geometric analog is true (\cf \cite[9.1.3]{A}, where this is proven in the much more general context of $\otimes$-multiplicative slope filtrations in quasi-tannakian categories). On the other hand, this is true for integral unimodular lattices (which are polystable of slope $0$, any unimodular sublattice of an integral lattice being an orthogonal summand, \cf \cite[ch. 1]{Mart}).

     \section{Counter-examples (proof of Proposition 0.6)}

 \subsection{} Let us show by an example that Lemma \ref{klei} does not hold in the arithmetic case: that is, a nef hermitian lattice may have negative degree. 
 
\smallskip Let $\mathbb A_2$ be the root lattice with Gram matrix  $  \, (\begin{smallmatrix} 2&1  \\  1&2  \end{smallmatrix}) \,$  in some basis $(e_1, e_2)$.
 
Let us fix ${{\lambda}} \in [\frac{1}{2}\log\frac{3}{2}, \, \frac{\log 3}{4} [$. Then   $\mathbb A_2{\langle{{\lambda}}\rangle}$ has degree
$$\widehat\deg\, (\mathbb A_2{\langle{{\lambda}}\rangle}) =  {2{{\lambda}}} - \frac{\log 3}{2}  \, \in [   \frac{\log 3}{2} -  {\log 2}  , \; 0[.$$ 
    Since the length of shortest vectors of  $\mathbb A_2{\langle{{\lambda}}\rangle}$ is $\sqrt{2}e^{-{{\lambda}}}$, any rank one sublattice has degree $\leq {{\lambda}} - \frac{\log 2}{2}  <   \mu(\mathbb A_2{\langle{{\lambda}}\rangle})$. In particular, $\mathbb A_2{\langle{{\lambda}}\rangle}$ is stable of negative degree. This also shows, by additivity of the degree and since ${\lambda} \geq \frac{1}{2}\log\frac{3}{2}$, that any quotient of rank one of $\mathbb A_2{\langle{{\lambda}}\rangle}$ has nonnegative degree. 
 
 Let us show that this remains true after any finite extension $K'/\Q$, so that $\mathbb A_2{\langle{{\lambda}}\rangle}$ is {nef} (taking into account Remark \ref{r3.1}$.1)$).
   
Let $\ell = ae_1 + be_2, \, a,b \in {\frak o}_{K'},$ be a nonzero vector in  $\mathbb A_2{\langle{{\lambda}}\rangle}_{  {\frak o}_{K'}}$. It is enough to show that the hermitian ${\frak o}_{K'}$-lattice spanned by ${\ell}$ has degree  $\leq   [{K'}:\Q]({{\lambda}} -\frac{\log 2}{2} )  $;  in other words, that
 $$\prod_\sigma\, \vert\vert \sigma(a) e_1 + \sigma(b) e_2\vert\vert^2  \geq (2e^{-{2{\lambda}}} )^{ [{K'}:\Q]}  $$ (product over the complex embeddings $\sigma$ of ${K'}$).
 
 One may assume $ab\neq 0$. Since the angle between $e_1$ and $e_2$ is $\pi/3$, one has $$\frac{e^{2{{\lambda}}}}{2}\vert\vert \sigma(a) e_1 + \sigma(b) e_2\vert\vert^2 =   \vert \overline{\sigma(a)}\vert^2 + \vert \sigma(b)  \vert^2 + Re (\overline{\sigma(a)} {\sigma(b)} )  $$
$$=  ( \vert \overline{\sigma(a)} \vert- \vert \sigma(b) \vert )^2 + 2  \vert \overline{\sigma(a)} {\sigma(b)}\vert + Re (\overline{\sigma(a)} {\sigma(b)} ) \geq \vert \overline{\sigma(a)} {\sigma(b)}\vert = \vert  {\sigma(a b)}\vert  .$$
 Since $a$ and $b$ are nonzero algebraic integers,  $\prod_\sigma\, \vert  {\sigma(a b)}\vert \geq 1$. 
 
 This finishes the proof that $\mathbb A_2{\langle{{\lambda}}\rangle}$ is {nef}.  

\smallskip By Remark \ref{r3.1}$.2)$, it follows that for any $\lambda'\geq 0$, $\mathbb A_2{\langle{{\lambda}}\rangle}\perp \Z{\langle{{\lambda'}}\rangle}$ is also nef. In particular {\it $\mathbb A_2{\langle{{\lambda}}\rangle}\perp \Z{\langle \frac{\log 3}{2}- {2\lambda}  \rangle}$ is nef of degree $0$, but not semistable} (it has a positive and a negative slope).

        \subsection{} Let us now show that ``{nef} $\otimes$ {nef} is not necessarily {nef}", in the arithmetic case.  

  \smallskip   Let   ${\bar E}$ be the unique indecomposable unimodular integral hermitian lattice of rank three over $\mathcal O_K = \Z[\omega],\; \omega= \frac{1+\sqrt{-7}}{2}$: in a suitable basis $(e_1,e_2, e_3)$, its hermitian Gram matrix (with entries $\langle e_i, e_j\rangle$) is
$$  \begin{pmatrix}  2& \omega &1\\ \bar\omega & 2&1\\1&1&2 \\   \end{pmatrix}   $$(\cf \cite[4.4]{C3}\cite[p. 415]{H}).

 Let us fix ${{\lambda}} \in \, ] \frac{\log 3}{2} ,\;  \log 3-\frac{2}{3}\log 2]$. 
   We shall show that
{\it  ${{\bar E}}{{\langle{-{\lambda}}\rangle}}$ is {nef} but ${{\bar E}}{{\langle{-{\lambda}}\rangle}}^{\otimes 2}$ is not. }

\medskip As a vector in ${{\bar E}}\otimes_{\frak{o}_K} {{\bar E}}^\vee $ (which can be identified to $  {{\bar E}}^{\otimes 2}$ since $\bar E$ is unimodular), the identity has length $\sqrt 3$. Dually, it gives rise to a rank one quotient of ${{\bar E}}{{\langle{-{\lambda}}\rangle}}^{\otimes 2}$ of degree  $-2{{\lambda}} +  {\log 3}  <0,$ hence ${{\bar E}}{{\langle{-{\lambda}}\rangle}}^{\otimes 2}$ is not {nef}.

\smallskip In order to prove that ${{\bar E}} {{\langle{-{\lambda}}\rangle}}$ is {nef}, let us show, dually, that any vector  $\ell  \in {\bar E}_{  {\frak o}_{K'}}$ spans a hermitian lattice of degree $\leq -{{\lambda}}[K' :K]$  (taking into account Remark \ref{r3.1}$.1)$); in other words (by definition of the degree), that 
 $$\prod_\sigma\, \vert\vert \sigma(\ell ) \vert\vert_\sigma^2  \geq e^{ {{\lambda}} [K' :K]} $$ (product over the complex embeddings $\sigma$ of ${K'}$ which induce identity on $K$).

 \medskip Let us first treat the case when $\ell $ belongs to the sublattice generated by $e_1$ and $e_2$.  We note that $e_1$ and 
 $\, e'_2 = -e_1+ \bar\omega e_2\,$
 form an orthogonal basis of  $Ke_1\oplus Ke_2$, and that  $\vert\vert e_1\vert\vert^2=\vert\vert e'_2\vert\vert^2=2$.  Moreover $\ell $ can be written
 $$ \ell  = \frac{\omega}{2}(ae_1+ be'_2),\;\; a,b\in {\frak o}_{K'}$$ and $$ \vert\vert \sigma(\ell ) \vert\vert_\sigma^2 =   \vert  \sigma(a)\vert^2  + \vert\sigma(b) \vert^2 .$$
 One thus has
 $$\prod_\sigma\, \vert\vert \sigma(\ell ) \vert\vert_\sigma^2  \geq  2^{[K':K]}  {\vert N_{K'/K}(ab)\vert},$$ which is  $   > e^{ {{\lambda}} [K':K]} $ if $ab\neq 0$ (since $\lambda < \log 2$).   If $ab=0$, then $\ell $ is in fact an integral multiple of  $e_1$ or $e'_2$ and one concludes as well.

 \medskip In order to treat the general case, let us first note that 
 $$ f_3= \omega^2 e_1 + \bar \omega^2 e_2 + 2 e_3 $$ is orthogonal to  $e_1,e_2$ and that
 $$ \vert\vert  f_3\vert\vert ^2 = 2\langle e_3, f_3\rangle = 2( \omega^2 +  \bar\omega^2 +4)= 2. $$ 
 Besides, in order to exploit the symmetry between $e_1$ and $e_2$ which appears in the Gram matrix, it is useful to introduce the numbers   $$\theta^\pm = \bar\omega \,(\pm\frac{\sqrt 2}{2}-1)$$ and the orthogonal vectors 
 $$f^{\pm}_1= e_1 + \theta^\pm e_2,\;\; f^{\pm}_1= \bar  \theta^\pm  e_1 + e_2. $$ 
Since $\vert\theta^\pm\vert^2= 3\mp 2\sqrt 2$, one has
 $$ \vert\vert  f_1^\pm\vert\vert ^2 = \vert\vert  f_2^\pm\vert\vert ^2 =   2\sqrt 2 (  \sqrt 2 \mp 1).$$
 
One may assume $K'\supset K(\sqrt 2)$ and divide the set of embeddings $\sigma$ into two parts (denoted by $ \Sigma^+$ and $\Sigma^-$ respectively): those which act as identity on $K(\sqrt 2)$, and the other ones. For $\sigma\in \Sigma^\pm$, we use the orthogonal basis $(f^\pm_1,f^\pm_2,f_3)$ of ${{\bar E}}\otimes_{{\frak o}_K}K'$. There exists $a,b,c\in {\frak o}_{K'}$ such that for any $\sigma\in \Sigma^\pm$,
 $$ \sigma(\ell ) = \frac{1}{2}(af^\pm_1+ bf^\pm_2 + cf_3 ), $$ and one has
 $$ \vert\vert \sigma(\ell ) \vert\vert_\sigma^2 =  \frac{1}{2}( \vert  \sigma(a)\vert^2  +  \sqrt 2(\sqrt 2 \mp 1)( \vert\sigma(b) \vert^2 + \vert\sigma(c)\vert^2)).$$

Taking into account the inequality of arithmetic and geometric means $A+B+C\geq 3(ABC)^{1/3}$, one thus gets  $$\prod_\sigma\, \vert\vert \sigma(\ell ) \vert\vert_\sigma^2   \geq   (3. 2^{-2/3})^{ [K':K]}   \vert N_{K'/K}(  abc )\vert^{2/3}   \geq  e^{{{\lambda}} [K':K]} \vert N_{K'/K}(  abc )\vert^{2/3},  $$ which is 
greater or equal to $  e^{{{\lambda}}[K':K]}$ if $abc\neq 0$.

\smallskip It remains to deal with the case when  $c\neq 0$ but $a$ or $b$ is zero. One remarks that in those cases, one has in fact  $\omega \ell  \in {\frak o}_{K'} f^\pm_2 + {\frak o}_{K'}f_3$ or else $\bar\omega \ell  \in {\frak o}_{K'} f^\pm_1 + {\frak o}_{K'}f_3$, and one concludes as well.

This finishes the proof that ${{\bar E}}{{\langle{-{\lambda}}\rangle}}$ is {nef}.  

\smallskip Since $\bar E$ is integral unimodular, $ ({{\bar E}}{{\langle{-{\lambda}}\rangle}})^\vee$ is semistable of positive slope, and thus is also nef. By Remark \ref{r3.1}$.2)$, it follows that   {\it   ${{\bar E}}{{\langle{-{\lambda}}\rangle}}\perp ({{\bar E}}{{\langle{-{\lambda}}\rangle}})^\vee$ is {nef} of degree $0$, but its tensor square is not}.

  \smallskip \begin{remark}   In the geometric  case, the standard way of proving that ``{nef} $\otimes$ {nef} is {nef}" is by showing first that large symmetric powers of a {nef} bundle $E$ are {nef},  taking advantage of the formula  $H^0({\mathbb P}(E), \sO_{{\mathbb P}(E)}(n))\cong S^n   E $.
  
  Let us see what breaks down in the arithmetic case. Let $\bar E$ be a {nef} hermitian ${\frak o}_K$-lattice of rank $r$ and let us consider  $\bar \sL =    \sO_{{\mathbb P}(\bar E)}(1)$ over $X  ={\mathbb P}(E)$. According to S. Zhang \cite[Cor. 5.7]{Z}, for $n>>0$, $H^0(X,  \bar \sL^{\otimes n } )$ is spanned by its sections of supnorm $\leq 1$. But $H^0(X,  \bar\sL^{\otimes n})\cong (S^n \bar E)\langle{\rho_n}\rangle$, where $\rho_n = \frac{1}{2} \log     (\begin{smallmatrix}r -1+n\\ n  \end{smallmatrix} ) $. Hence  $(S^n  \bar E  ){\langle{{\lambda}}\rangle}$ appears as a quotient of the hermitian lattice $ ( \sO _K\langle  {{\lambda}} - \rho_n\rangle)^{(\begin{smallmatrix}r -1+n\\ n  \end{smallmatrix} )}  $, which is {nef} provided ${\lambda} \geq \rho_n = O(\log n)$. The latter constraint does not allow to apply this efficiently to $ \bar E = \bar E_{1 } \perp \bar E_{2 }  \perp  {\frak o}_K^{n-2}$, for instance. 
  \end{remark}
   
   \subsection{} Finally, we show (in analogy with the geometric case) that a hermitian lattice $\bar E$ whose rank one quotients are of nonnegative degrees is not necessarily nef, and does not necessarily satisfy $\hat c_1(\sO_{{\mathbb P}(\bar E)}(1))^{\cdot r}\geq 0$. 
  
 For $p= 5, 13$ or else $37$, the Hilbert class field of $K= \Q(\sqrt{-p})$ is $K'=K(\sqrt{-1})$. One has $\frak{o}_{K'} =   \frak{o}_K \frac{1+\sqrt p}{2}\oplus \frak{o}_K \sqrt{-1} $, which contains   $   \frak{o}_K  \oplus \frak{o}_K \sqrt{-1} =    \frak{o}_K  (1+\sqrt p) \oplus \frak{o}_K \sqrt{-1}$ as a subgroup of index $4$.

Let us make $\frak{o}_{K'} $ into a hermitian $\frak{o}_K$-lattice by means of the hermitian form $$\langle x, y\rangle = \frac{1}{2}tr_{{K'} /K}\, \bar xy.$$ Its sublattice $ \frak{o}_K  \oplus \frak{o}_K \sqrt{-1}$ is then the orthogonal sum of two copies of the unit lattice $\frak{o}_K $. Since it has index $4$, it follows that $$\widehat{\deg}\, \frak{o}_{K'}  = 2\log 2.$$ Our example $\bar E$ will be the dual of $\frak{o}_{K'} $. By the first equality $   \widehat\deg \,\bar E =  \hat c_1(\sO_{{\mathbb P}(\bar E)}(1))^{\cdot 2} - \frac{  [K:\Q] }{2}    $ in \eqref{eq2}, one has 
$$ \hat c_1(\sO_{{\mathbb P}(\bar E)}(1))^{\cdot 2} = 1-2\log 2 <0.$$
By Zhang's theorem, $\bar E$ is not nef. This can also be viewed directly as follows: since $K'$ is unramified over $K$, the right ${\frak o}_{K'}$-algebra ${\frak o}_{K'}\otimes_{{\frak o}_{K}}{\frak o}_{K'}$ is isomorphic  ${\frak o}_{K'} \oplus  {\frak o}_{K'}$, the two factors being permuted by $Gal(K'/K)$ (which is an isometry group of the lattice $\bar E$). This provides an orthogonal decomposition
$$ {\bar E_{{\frak o}_{K'}} } \cong {{\frak o}_{K'}}\langle -\log 2 \rangle  \perp  {{\frak o}_{K'}}\langle -\log 2\rangle $$
  where ${\frak o}_{K'}$ stands for the unit ${\frak o}_{K'}$-hermitian lattice (it follows that $\bar E$ is semistable). 

\smallskip
Let us show, on the other hand, that  any rank one $\frak{o}_K $-sublattice $\bar L$ of $\frak{o}_{K'}= \bar E^\vee$ has nonpositive degree. We note that $\bar L^\flat = \bar L\cap  ( \frak{o}_K  \perp \frak{o}_K \sqrt{-1})$ has index $\iota= 1,2$ or $4$ in $L$. If $\iota =1$, $\widehat{\deg}\,\bar L\leq 0$ since  $  \frak{o}_K  \perp \frak{o}_K $ is semistable of degree $0$.   If $\iota = 2$, $\bar L^\flat$ is not equal to either factor in $ \frak{o}_K  \perp \frak{o}_K \sqrt{-1}  $ (since those factors are saturated in $\frak{o}_{K'}$), and we have to show that $\widehat{\deg}\,\bar L^\flat\leq -\log 2$. Since $\bar L^\flat  $ becomes free after tensoring by $\frak{o}_{K'}$, it is enough to show that for any $\ell = (a,b)\in   \frak{o}_{K'}  \perp \frak{o}_{K'}   $ with $a,b\neq 0$,  
 $$\prod_\sigma\, \vert\vert \sigma(\ell ) \vert\vert_\sigma^2  \geq e^{ 2({\log 2}) [K' :K]}= 16 $$ (product over all complex embeddings of $K'$). But $\prod_\sigma\, \vert\vert \sigma(\ell ) \vert\vert_\sigma^2  = \prod_\sigma\, (\vert  \sigma(a) \vert^2 + \vert  \sigma(b) \vert^2 ) \geq 2^{[K':\Q]} \prod_\sigma\,  \vert  \sigma(ab) \vert   = 2^4 \vert N_{K'/\Q}(ab) \vert \geq 16.$ 
 
 Finally, if $\iota = 4$, then  $ L=  {\frak o}_{K}  \frac{1+\sqrt p}{2}\subset  {\frak o}_{K'}$, which has degree $- \log \frac{p+1}{4}<0$.

\section{Formalization}   
\subsection{}
 In any category with a zero object $0$, \ie an object which is both initial and terminal (such an object is unique up to unique isomorphism), there is a unique zero morphism between any two objects (a morphism which factors through $0$), and for any morphism $\, f$, one has the notions of kernel $\,\ker f$, cokernel $\,\coker f$, coimage $\,\coim f\,$ (cokernel of the kernel) and image $\,\im f\,$ (kernel of the cokernel) of  $f$.

\smallskip In a category with kernels and cokernels (\ie with a zero object and such that any morphism has a kernel and a cokernel), any morphism $f$ has a canonical factorization $\, f = \coim f \circ \,\bar f\,\circ \,\im f$. We denote by $\Coim f$ and $\IM f$ the source and target of $\bar f$ respectively.

 \medskip Let $\sC$ be an essentially small {\it category with kernels and cokernels} (we do not assume that $\sC$ is additive). Let $\mu$ be a real-valued function on the set of nonzero objects of $\sC$, such that for any nonzero morphism $f$, 
  \begin{equation}\label{mu} \mu(\Coim f)\leq \mu(\IM f).
  \end{equation}
  
  \medskip For any nonzero object $M$, we set 
  $$ \mu_{\max} (M)=  \sup_{N  }\, \mu(N) \;\in \R  \cup \{+ \infty\}$$  
  the supremum being taken over nonzero subobjects $N$ of $M$, or equivalently by \eqref{mu}, over  nonzero kernels of  morphisms with source $M$, and
  
   $$ \mu_{\min} (M)=  \inf_{P  }\, \mu(P) \;\in \R  \cup \{- \infty\}$$  
  the supremum being taken over nonzero quotients $P$ of $M$, or equivalently by \eqref{mu}, over  nonzero cokernels of  morphisms with target $M$,

  \medskip Let us assume in addition that $\sC$ is a {\it monoidal} category with respect to a tensor product $\otimes$ and unit $\un\,$  (we do not assume that $\otimes $ is symmetric).  We also assume the formula
   \begin{equation}\label{mutens} \mu(M_1\otimes M_2)= \mu(M_1)+\mu(M_2).   \end{equation}  
 
   \smallskip Let us assume that $\sC$ is {\it anti-equivalent to itself} via a functor $()^\vee : \;\sC\to \sC^{op}$  which is  related to $\otimes$ via a morphism $\,u\,$ of functors   from $\sC \times \sC$ to sets:  $ \;\sC(\un , M_1\otimes M_2) \overset{u_{M_1,M_2}}{\to} \sC( M_2^\vee, M_1).$ We assume that $u_{M_1,M_2}$ sends nonzero morphisms to nonzero morphisms.
     We also assume the formula
   \begin{equation}\label{mudual} \mu(M^\vee)= - \mu(M). \end{equation} 
   It follows from this, and the fact ${}^\vee$ is an equivalence, that
      \begin{equation}\label{mumin}\, \mu_{\min} (M^\vee) = - \mu_{\max} (M).\end{equation}
   
On the other hand, let us call an object $L$   {\it invertible} if there exists $L^{\otimes (-1)}$ such that $L\otimes L^{\otimes (-1)}  \cong L^{\otimes (-1)} \otimes L\cong  \un$. For such an $L$ and any $M$, the functor $L^{\otimes (-1)} \otimes -$ induces a bijection between isomorphism classes of subobjects of $M$ and of subobjects of $L^{\otimes (-1)} \otimes M $. Therefore
  \begin{equation}\label{muinv}\, \mu_{\max}(L^{\otimes (-1)} \otimes M ) =   \mu_{\max} (M) + \mu(L^{\otimes (-1)}) =  \mu_{\max} (M) - \mu(L).   \end{equation}

   \medskip For any nonzero object $M$, we set 
  $$ \nu (M)=  \sup_{L  }\, \mu(L) \;\in \R  \cup \{\pm \infty\}$$  
  the supremum being taken over invertible subobjects $L$ of $M$ (by convention, this is $-\infty$ is there is no such $L$). Obviously, $ \nu (M)\leq \mu_{\max} (M)$, and for any subobject $N$ of $ M,\; \nu(N) \leq \nu(M)$.

\begin{lemma}\label{lemmanu}  
 \begin{equation}\label{nu}  \nu(M_1\otimes M_2)   \leq   \mu_{\max}(M_1) + \mu_{\max}(M_2). \end{equation}
  \end{lemma}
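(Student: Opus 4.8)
The plan is to mimic, in this abstract setting, the two concrete tensor-product arguments given in Sections 1 and 2, stripped down to their categorical skeleton. Let $L$ be an invertible subobject of $M_1\otimes M_2$ with $\mu(L)=\nu(M_1\otimes M_2)$ (if no invertible subobject exists the inequality is trivial since the left side is $-\infty$). The inclusion $L\hookrightarrow M_1\otimes M_2$ is a nonzero morphism, and tensoring with $L^{\otimes(-1)}$ on the left gives a nonzero morphism $\un\cong L^{\otimes(-1)}\otimes L\to L^{\otimes(-1)}\otimes M_1\otimes M_2\cong (L^{\otimes(-1)}\otimes M_1)\otimes M_2$.

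Now I would apply $u$ to this element of $\sC(\un,(L^{\otimes(-1)}\otimes M_1)\otimes M_2)$ to obtain a nonzero morphism
\[
f:\ M_2^\vee\ \longrightarrow\ L^{\otimes(-1)}\otimes M_1 .
\]
Factor $f$ through its canonical factorization $f=\coim f\circ\bar f\circ\im f$; since $f$ is nonzero, $\Coim f$ is a nonzero quotient of $M_2^\vee$ and $\IM f$ is a nonzero subobject of $L^{\otimes(-1)}\otimes M_1$. Hypothesis \eqref{mu} gives $\mu(\Coim f)\le\mu(\IM f)$. On the one hand $\mu(\Coim f)\ge\mu_{\min}(M_2^\vee)=-\mu_{\max}(M_2)$ by \eqref{mumin}. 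On the other hand $\mu(\IM f)\le\mu_{\max}(L^{\otimes(-1)}\otimes M_1)=\mu_{\max}(M_1)-\mu(L)$ by \eqref{muinv}. Chaining these,
\[
-\mu_{\max}(M_2)\ \le\ \mu(\Coim f)\ \le\ \mu(\IM f)\ \le\ \mu_{\max}(M_1)-\mu(L),
\]
which rearranges to $\mu(L)\le\mu_{\max}(M_1)+\mu_{\max}(M_2)$. Taking the supremum over all invertible subobjects $L$ yields \eqref{nu}.

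The one point that needs care — and which I expect to be the only real obstacle — is justifying the identifications $L^{\otimes(-1)}\otimes L\cong\un$ and the associativity reshuffling $L^{\otimes(-1)}\otimes(M_1\otimes M_2)\cong(L^{\otimes(-1)}\otimes M_1)\otimes M_2$ at the level of morphisms, and checking that after these isomorphisms the composite map is still nonzero so that $u$ produces a nonzero $f$. Since $\otimes$ is not assumed symmetric one must be scrupulous about the order of factors, but the invertibility of $L$ (with a two-sided inverse, as in the definition) together with functoriality of $u$ makes this routine; the map stays nonzero because tensoring by an invertible object is an equivalence and hence faithful, and $u_{M_1,M_2}$ is assumed to preserve nonzero morphisms. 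Everything else is a direct transcription of \eqref{mu}, \eqref{mumin} and \eqref{muinv}, exactly paralleling the volume/degree bookkeeping of the earlier proofs.
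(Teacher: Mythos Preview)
Your proof is correct and follows essentially the same approach as the paper's: tensor the inclusion $L\hookrightarrow M_1\otimes M_2$ by $L^{\otimes(-1)}$, apply $u$ to obtain a nonzero $f':M_2^\vee\to L^{\otimes(-1)}\otimes M_1$, and combine \eqref{mu}, \eqref{mumin}, \eqref{muinv} via the canonical factorization. The only quibble is your opening line ``Let $L$ be an invertible subobject \ldots\ with $\mu(L)=\nu(M_1\otimes M_2)$'': the supremum defining $\nu$ need not be attained, but since you in fact prove the bound for every invertible subobject $L$ and take the supremum at the end, this is harmless.
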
 
  
        \begin{proof}  Any morphism $f: L\to M_1\otimes M_2$ gives rise to a morphism $\un \to L^{\otimes (-1)}\otimes M_1\otimes M_2 $, and in turn to a morphism $f':  M_2^\vee {\to}   L^{\otimes (-1)} \otimes M_1 \,$, which is nonzero if $f\neq 0$.
Applying \eqref{mu} to the canonical factorization of  $f'$, we get
  $\mu_{\min}(M_2^\vee)\leq \mu_{\max}(  L^{\otimes (-1)} \otimes M_1)$, hence, taking into account \eqref{mumin} and \eqref{muinv},  
  $\,\mu(L) \leq  \mu_{\max}(M_1)+ \mu_{\max}(M_2) $, which proves \eqref{nu}.
  \end{proof}
   
        Now, let $\rho$ be another real-valued function on the set of nonzero objects of $\sC$ such that:        
             \begin{equation}\label{rhomon}{\text{ for any subobject}}\; N\,{\text{of}}\; M,\; \rho(N) \leq \rho(M) , \end{equation}
        \begin{equation}\label{rho}   \rho(M_1\otimes M_2)= \rho(M_1)+\rho(M_2),\end{equation}
 \begin{equation}\label{rhonu} \mu(M)\leq \nu(M)+\rho(M).  \end{equation}
 Applying \eqref{rhonu} to subobjects of $M$, and taking \eqref{rhomon} into account, one gets
    \begin{equation}\label{rhonumax}  \mu_{\max}(M)\leq \nu(M)+\rho(M),   \end{equation}
    and one finally derives from  \eqref{nu}, \eqref{rhonumax} (for $M= M_1\otimes M_2$) and \eqref{rho} that
      \begin{equation}\label{mumaxrho}  \mu_{\max}(M_1\otimes M_2)   \leq   (\mu_{\max}+\rho)(M_1) +  ( \mu_{\max}+\rho)(M_2). \end{equation}

\begin{remark} $i)$ Formula \eqref{mutens} (which has been used only in the case when one factor is invertible) implies 
$$   \mu_{\max}(M_1) + \mu_{\max}(M_2)\leq \mu_{\max}(M_1\otimes M_2) ,$$ provided the tensor product of two monomorphisms is a monomorphism (in fact, it suffices that the tensor product of two kernel morphisms is a monomorphism). 

$ii)$ The above conditions on $(\sC, \otimes, {}^\vee, \mu)$ are fulfilled in the case of a {\it quasi-tannakian category} over a field of characteristic zero (\cf \cite[\S\S 7,8]{A}) with a {\it determinantal} slope function $\mu$ (\ie, which satisfies the formula $\mu(M)=\mu({\rm det}\,M)/\rk M$). In this case, $L^{\otimes (-1)}= L^\vee$, and the maps $u_{M_1,m_2}$  are bijections given by composition   
    $   M^\vee_2  \overset{f\otimes 1}{\to} M_1\otimes M_2\otimes  M^\vee_2  \overset{1\otimes \varepsilon_{M_2} }{\to} M_1 ,$ where $\varepsilon_{M_2}: M_2\otimes M_2^\vee \to \un$ is the evaluation morphism.
 \end{remark}

\subsection{}   The above reasoning covers the case of euclidian lattices, with $\,\rho = \frac{1}{2}\log \rk\,$ (\cf subsection 2.1). 
The case of vector bundles requires a slightly more refined setting, to account for finite base changes.

\smallskip Let $I$ be a directed poset and let $(\sC_i, \otimes)_{i\in I}$ be an inductive system of essentially small monoidal categories satisfying the above requirements. We also assume that each $\sC_i$ carries a  (weak right duality) functor  $(\;)^{\vee_i}$, and is endowed with a function $\mu_i $, satisfying the above requirements (no compatibility is required between the $()^{\vee_i}$'s and between the $\mu_i$'s respectively, when $i$ varies).

One defines
$$\tilde \mu =  \limsup_I\, \mu_i,\;\; \tilde \mu_{\max}=  \limsup_I\, \mu_{i,\max},\; \;\tilde \nu=  \limsup_I\,  \nu_i.$$
 Obviously, $  \tilde\nu (M)\leq  \tilde\mu_{\max} (M)$, and for any subobject $\; N\subset M,\;  \tilde\nu(N) \leq  \tilde\nu(M)$. 

Inequality \eqref{nu} applies at each stage $\sC_i$, and gives 

\begin{lemma}  
 \begin{equation}\label{tildenu}  \tilde\nu(M_1\otimes M_2)   \leq   \tilde\mu_{\max}(M_1) + \tilde\mu_{\max}(M_2). \;\;\; \;\;\; \square\end{equation}
  \end{lemma}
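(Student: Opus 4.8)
The plan is to reduce the statement to the single-stage inequality \eqref{nu}, which has already been established in Lemma \ref{lemmanu}, by passing to the limit over the directed poset $I$. First I would fix nonzero objects $M_1, M_2$ and an invertible subobject $L$ of $M_1 \otimes M_2$ in some $\sC_i$; by definition of $\tilde\nu$ it suffices to bound $\mu_i(L)$ for each such pair $(i, L)$ by $\tilde\mu_{\max}(M_1) + \tilde\mu_{\max}(M_2)$, and then take the $\limsup$ over $I$ on the left. For a fixed index $i$, Lemma \ref{lemmanu} applied in the category $\sC_i$ (which satisfies all the hypotheses of subsection 5.1, including the self anti-equivalence $(\;)^{\vee_i}$ and the function $\mu_i$) gives immediately $\nu_i(M_1 \otimes M_2) \leq \mu_{i,\max}(M_1) + \mu_{i,\max}(M_2)$, hence in particular $\mu_i(L) \leq \mu_{i,\max}(M_1) + \mu_{i,\max}(M_2)$.

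The next step is simply to bound the right-hand side above by the quantity we want: since $\mu_{i,\max}(M_k) \leq \tilde\mu_{\max}(M_k)$ is \emph{not} automatic for a fixed $i$ (the $\limsup$ could be attained only cofinally), I should instead keep the $i$-dependence and take $\limsup_I$ of both sides of $\nu_i(M_1 \otimes M_2) \leq \mu_{i,\max}(M_1) + \mu_{i,\max}(M_2)$. Using that $\limsup$ is subadditive, $\limsup_I(\mu_{i,\max}(M_1) + \mu_{i,\max}(M_2)) \leq \limsup_I \mu_{i,\max}(M_1) + \limsup_I \mu_{i,\max}(M_2) = \tilde\mu_{\max}(M_1) + \tilde\mu_{\max}(M_2)$, while the left-hand side is exactly $\tilde\nu(M_1 \otimes M_2)$ by definition. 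This yields \eqref{tildenu}.

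The only genuinely delicate point — and the one I expect to be the main obstacle, though it is more a bookkeeping subtlety than a real difficulty — is making sure that $M_1 \otimes M_2$ and the subobject $L$ live coherently across the inductive system, i.e. that one is comparing the values $\nu_i$ at the \emph{same} object as $i$ varies. This is handled by the assumption that $(\sC_i, \otimes)_{i \in I}$ is an inductive system of monoidal categories: an object of the colimit is represented by an object in some $\sC_{i_0}$, its image in each $\sC_i$ for $i \geq i_0$ is well-defined, and the transition functors are monoidal, so $\otimes$ commutes with them; the quantities $\mu_{i,\max}$, $\nu_i$ are then evaluated on these images, and the $\limsup$ is taken along the cofinal subset $\{i \in I : i \geq i_0\}$, which does not change the value. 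Once this is set up, no further compatibility between the $(\;)^{\vee_i}$ or the $\mu_i$ is needed — each application of Lemma \ref{lemmanu} is entirely internal to a single $\sC_i$ — and the proof is a one-line passage to the limit.

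\begin{proof} Fix nonzero objects $M_1, M_2$, represented in some $\sC_{i_0}$; for $i \geq i_0$ write $M_1, M_2$ also for their images in $\sC_i$, so that (the transition functors being monoidal) $M_1 \otimes M_2$ has a well-defined image in each such $\sC_i$. Inequality \eqref{nu}, applied in $\sC_i$ with its data $(\otimes, (\;)^{\vee_i}, \mu_i)$, gives $\nu_i(M_1 \otimes M_2) \leq \mu_{i,\max}(M_1) + \mu_{i,\max}(M_2)$ for every $i \geq i_0$. Taking $\limsup_I$ over the cofinal set $\{i \geq i_0\}$ and using the subadditivity of $\limsup$ on the right-hand side, together with the definitions of $\tilde\nu$ and $\tilde\mu_{\max}$, we obtain $\tilde\nu(M_1 \otimes M_2) \leq \tilde\mu_{\max}(M_1) + \tilde\mu_{\max}(M_2)$, which is \eqref{tildenu}. \end{proof}
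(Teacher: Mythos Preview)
Your proof is correct and follows exactly the paper's approach: apply inequality \eqref{nu} in each $\sC_i$ and pass to the $\limsup$. The paper states this in one line (``Inequality \eqref{nu} applies at each stage $\sC_i$''), while you have carefully spelled out the subadditivity of $\limsup$ and the bookkeeping about where objects live in the inductive system, but there is no substantive difference.
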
 

 If $I$ has a minimum, and $\sC$ denotes the category indexed by this minimum, and if $\rho$ is a function as  above, except that \eqref{rhonu} is replaced by
   \begin{equation}\label{rhonutilde} \tilde \mu(M)\leq \tilde\nu(M)+ \rho(M),   \end{equation}
   one finally gets  (by combining the last two inequalities, for $M=M_1\otimes M_2$)   \begin{equation}\label{mumaxrhotilde}  \tilde\mu_{\max}(M_1\otimes M_2)   \leq   (\tilde\mu_{\max}+\rho)(M_1) +  ( \tilde\mu_{\max}+\rho)(M_2). \end{equation}

\subsection{}  This covers the case of vector bundles on a projective smooth curve $S$, by taking $\rho =0$.
 Indeed, fix an algebraic closure $K^{alg}$ of the function field $K$, let $I$ be the set of subfields of $K^{alg}$ containing $K$, ordered by inclusion; let $S_i$ be the normalization of $S$ in the field $K_i$ corresponding to $i$, and $\sC_i$ be the category of vector bundles on $S_i$.
 Define $\mu_i= \mu/[K_i:K]$. 
  Then 
  $\,\mu(E) = {\tilde\mu}(E),$ and {\it $E$ is {nef} if and only if $\tilde\nu(E^\vee)\leq 0$}. By Kleiman's theorem,
 $\,{\tilde\mu}(E) \leq \tilde\nu(E)$, which actually implies  \begin{equation}\label{Kleimanu}\,{\tilde\mu}_{\max}(E) = \tilde\nu(E).\end{equation}
One has $\, \mu_{\max}(E)\leq {\tilde\mu}_{\max}(E),$ and this is an equality in characteristic $0$,  by Galois descent of the Harder-Narasimhan slope filtration (this is the only place where this filtration is used).

 From \eqref{Kleimanu},  the equality $ \tilde\nu(E_1\otimes  E_2)  =   \tilde\nu(  E_1) + \tilde\nu( E_2)$ appears as a consequence of \eqref{mumaxrhotilde}. 
 This clarifies why ``{nef} $\otimes$ {nef} is {nef}"  in any characteristic (\cf \cite{Ba}\cite{Br}), while ``semistable $\otimes $ semistable is semistable" only in characteristic $0$. Recall that in positive characteristic, a vector bundle $E$ is called {\it strongly semistable} if $\mu(E)= \tilde\mu_{\max}{(E)}$(this amounts to requiring that any iterated Frobenius pull-back is semistable). It follows immediately from \eqref{mumaxrhotilde} that  ``strongly semistable $\otimes $ strongly semistable is strongly semistable" (\cf \cite[cor. 7.3]{Mo0}.

\subsection{}  
This applies in a similar way to hermitian ${\frak o}_K$-lattices, taking $\rho= \frac{[K:\Q]}{2}\log\rk$. One has $\mu(\bar E) = {\tilde\mu}(\bar E),\;  \mu_{\max}(\bar E)={\tilde\mu}_{\max}(\bar E) ,$ and $ \,{\tilde\mu}(\bar E) \leq \tilde\nu(\bar E)+ \rho(\bar E)$ by Lemma \ref{zhan} (consequence of Zhang's theorem). 
 
 On the other hand, we have seen that it is not true that $ \tilde\nu(\bar E_1\otimes \bar E_2)   \leq   \tilde\nu(\bar E_1) + \tilde\nu(\bar E_2)$ in general.
 
 \subsection{}  The analog of Theorem 0.5 for Higgs bundles ${\mathfrak E}= (E, \;\theta: E\to E\otimes \omega_S)$ on a smooth projective curve $S$ in characteristic $0$ is known, \cf \cite[Cor. 3.8]{Si} (the slope of ${\mathfrak E}$ is the slope of $E$, and  $\mu_{max}({\mathfrak E})$ is the supremum of slopes of Higgs subbundles). One could ask whether the above strategy applies in this context.  However, there are Higgs bundles of rank three with nilpotent $\theta$ for which $\tilde \mu({\mathfrak E})> \tilde\nu({\mathfrak E})$ , \cf  \cite[3.4]{BH}.
 
\section{Generalized vector bundles}

\subsection{} After having given the most general framework where our argument works, we consider a quite concrete categorical context which contains both contexts of vector bundle and hermitian lattices. Namely, we introduce the notion of generalized vector bundle, following ideas in \cite{Gau} and \cite{HoJS}.

Let $K$ be a field endowed with a collection $(\vert\;\vert_v)_{v\in V}$ of (not necessarily distinct) absolute values satisfying the product formula: for every $a\in K\setminus \{0\}$, $\vert a\vert_v = 1$ for all but finitely many $v$, and $\prod \vert a\vert_v = 1$. We denote by $K_v$ the completion of $K$ at $v$. If $v$ is archimedean, $K_v$ is isomorphic to $\R$ or $\C$, and one assumes that $\vert\;\vert_v$ coincides with the standard absolute value.

\smallskip A {\it generalized vector bundle} $\bar M = (M,(\vert\vert \; \vert\vert_v)_{v\in V})$  over $K$  is the data of a finite-dimensional $K$-vector space together with a $\vert\;\vert_v$-norm $\vert\vert \; \vert\vert_v$ on $M_v := M\otimes_K K_v$ for each $v$. One requires that for every $m\in M\setminus \{0\}$, $\vert\vert m\vert\vert_v = 1$ for all but finitely many $v$. If $\vert\;\vert_v$ is archimedean, one requires that $\vert\vert \; \vert\vert_v$ is euclidean/hermitian; if $\vert\;\vert_v$ is non archimedean, one requires that $\vert\vert \; \vert\vert_v$ is a ultranorm such that the ${\frak o}_{K_v}$-module $\{m\in M_v; \vert\vert m \vert\vert_v\leq 1\}$ is an ${\frak o}_{K_v}$-lattice of $M_v$ (so that $M_v$ admits an orthonormal basis).

\smallskip A {\it morphism} of generalized vector bundles is a $K$-linear map which is of norm $\leq 1$ on each $v$-completion.

\smallskip Generalized vector bundles form a category with kernels and cokernels, the norms being the induced and quotient norms respectively (this category is additive if and only if all the $\vert\,\vert_v$ are non-archimedean). Moreover, our assumption on the norms $\vert\vert \; \vert\vert_v$ allow us to define the tensor product in a standard way (for each $v$, the tensor product of orthonormal bases is orthonormal), so that the category of generalized vector bundles becomes monoidal symmetric. One also defines ``duals" in a standard way. 

\smallskip  There is a natural notion of determinant $\det \bar M$: as vector space, this is the top exterior power, and for every $v$, the determinant of any orthonormal basis has norm $1$ (note that $\det \bar M$ is not a quotient of the corresponding tensor power if there is some archimedean $\vert \; \vert_v$).

If $\rk \bar L = 1$, with generator $\ell$, one sets
\begin{equation}\mu(\bar L)= -\sum \log \vert\vert \ell\vert\vert_v.\end{equation}
By the product formula, this does not depend on $\ell$. In general one sets
\begin{equation}\mu(\bar M)=  \frac{\mu(\det \bar M)}{\dim  M}.  \end{equation}
It is easy to check conditions \eqref{mu}\eqref{mutens}\eqref{mudual}.

\smallskip { Lemma \ref{lemmanu} then applies to generalized vector bundles}.
  In addition, the product $\mu\cdot \dim$ behaves like a degree: it is additive with respect to short exact sequences of multifiltered spaces (this follows from its definition in terms of the determinant).

\subsection{Examples} $1)$ If $K$ is a number field, and $V$ is the set of its places $v$ (counted with multiplicity $[K_v:\Q_{p(v)}]$), then the category of generalized vector bundles is equivalent to the category of hermitian ${\frak o}_K$-lattices (the point is that $M_{{\frak o}_K}:= \bigcap_v\, (M\cap  \{m\in M_v, \vert\vert m \vert\vert_v\leq 1\})$ is an ${\frak o}_{K}$-lattice of $M$. Indeed, this is clear in rank one. In general, our conditions on the norm clearly imply that $M_{{\frak o}_K}$ contains a lattice, and any non-decreasing sequence of lattices contained in $M_{{\frak o}_K}$ has to stabilize since so do their determinants). The functions $\mu$ coincide.

\smallskip\noindent $2)$ If $K$ is the function field of a projective smooth curve $S$ over a field $k$, then the category of generalized vector bundles is equivalent to the category of vector bundles on $S$.  The functions $\mu$ coincide.
 
 \smallskip \noindent $3)$ If $K$ is any field, $V$ is finite and all $\vert\;\vert_v$ are trivial,  then the category of generalized vector bundles is equivalent to the category of finite-dimensional $K$-vector spaces with a finite collection (indexed by $V$) of decreasing exhaustive separated left-continuous\footnote{\ie such that $F_v^{\geq\lambda}M  = \bigcap_{\lambda'<\lambda}\,F_v^{\geq\lambda}M $.} filtrations indexed by $\R$: define 
 $$F_v^{\geq\lambda}M = \{m\in M ; \vert\vert m \vert\vert_v \leq e^{-\lambda}\}.$$
 
 The function $\mu$ coincides with
 the one considered by G. Faltings \cite{F}: 
 $$\mu(\bar M) = \frac{1}{\dim M}\sum_{v, \lambda}   \, \lambda \dim_K gr_{F_v}^\lambda \bar M.$$

\section{Tensor product of semistable multifiltered spaces (proof of Theorem 0.7)}

\subsection{} It is enough to take $i\in \{1,2\}$ and to establish the inequality
\begin{equation}\label{oncemore}\mu_{\max}(\bar M_1\otimes \bar M_2)\leq \mu_{\max}(\bar M_1)+ \mu_{\max}(\bar M_2),\end{equation}  using  the strategy of subsection 5.1.

What remains to prove is inequality \eqref{rhonu} with $\,\rho=0$, that is:
 \begin{equation}\label{zeronu} \mu(\bar M)\leq \nu(\bar M) .  \end{equation}
 
We may assume $V=\{1, 2,\ldots, n\}$. We shall first prove 

 \begin{lemma}  $ \mu(\bar M) = \frac{1}{\dim M}\sum_{ \lambda_1,\ldots, \lambda_n}   \, ( \lambda_1+\cdots +\lambda_n) \dim_K gr_{F_1}^{\lambda_1} \ldots gr_{F_n}^{\lambda_n} \bar M $ 

\smallskip\noindent  (where, for $m<n,\, gr_{F_{m+1}}^{\lambda_n} \ldots gr_{F_n}^{\lambda_1} \bar M$ is given the $m$ filtrations induced by  $F^{\geq .}_{ 1},\ldots , F^{\geq .}_m $ in that order). 
  \end{lemma}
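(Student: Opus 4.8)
The plan is to prove the formula by induction on $n$, the number of filtrations, peeling off one filtration at a time.

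First I would set up the base case. For $n=1$ the claim is just the definition of $\mu$ for a single filtration, which agrees with the formula $\mu(\bar M) = \frac{1}{\dim M}\sum_\lambda \lambda \dim_K \gr_{F_1}^\lambda \bar M$ recorded in Example 6.2.3) (case $3$); equivalently it follows directly from additivity of the degree $\mu\cdot\dim$ in short exact sequences applied to the filtration steps. Here I would spell out the key identity: if $\bar N'$ is a subobject with induced norms and $\bar N''$ the quotient with quotient norms, then $\mu(\bar N')\dim N' + \mu(\bar N'')\dim N'' = \mu(\bar M)\dim M$, because the determinant of $\bar M$ is (isometrically) the tensor product of $\det\bar N'$ and $\det\bar N''$; iterating over the jumps of $F_1$ gives the $n=1$ formula.

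For the inductive step I would fix the last filtration $F_n$ and consider the associated graded pieces $\bar M_j := \gr_{F_n}^{\mu_j}\bar M$ for the jumps $\mu_1 > \mu_2 > \cdots$, each equipped with the $n-1$ filtrations induced by $F_1,\ldots,F_{n-1}$ in that order. Since $\mu\cdot\dim$ is additive in short exact sequences and since, at each step of the $F_n$-filtration, the determinant splits isometrically, one gets
\begin{equation*}
\mu(\bar M)\dim M \;=\; \sum_j \bigl(\mu(\bar M_j) + \mu_j\bigr)\dim M_j,
\end{equation*}
the $\mu_j$ term coming from the fact that passing to $\gr_{F_n}^{\mu_j}$ shifts the $F_n$-contribution to the degree by exactly $\mu_j \dim M_j$ (this is the "degree of the line bundle" bookkeeping, best checked in rank one and extended by additivity). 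Then I would apply the induction hypothesis to each $\bar M_j$ with its $n-1$ induced filtrations, obtaining
\begin{equation*}
\mu(\bar M_j)\dim M_j = \sum_{\lambda_1,\ldots,\lambda_{n-1}} (\lambda_1+\cdots+\lambda_{n-1})\dim_K \gr_{F_1}^{\lambda_1}\cdots\gr_{F_{n-1}}^{\lambda_{n-1}}\bar M_j,
\end{equation*}
and substitute, noting $\gr_{F_1}^{\lambda_1}\cdots\gr_{F_{n-1}}^{\lambda_{n-1}}\bar M_j = \gr_{F_1}^{\lambda_1}\cdots\gr_{F_{n-1}}^{\lambda_{n-1}}\gr_{F_n}^{\mu_j}\bar M$ and summing the $\mu_j\dim M_j$ terms against the same graded decomposition to produce the missing $\lambda_n$ summand. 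Reindexing $\mu_j = \lambda_n$ and combining gives exactly the claimed formula.

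The main obstacle I anticipate is purely bookkeeping rather than conceptual: making precise that the "degree" function $\mu\cdot\dim$ is well-behaved under taking successive subquotients and induced filtrations — in particular that the determinant of $\bar M$ is genuinely \emph{isometric} (not merely isomorphic) to the tensor product of the determinants of the pieces of any filtration, so that the additive-in-short-exact-sequences property can be iterated freely, and that the filtrations induced on a subquotient in the prescribed order interact correctly with the $\gr$ operations (commutativity of the various $\gr_{F_i}$ on the associated graded, which holds because on the finite-dimensional vector space level these are just intersections of the filtration subspaces). Once these compatibilities are stated, the induction is routine; I would isolate the rank-one computation and the determinant-splitting as a preliminary remark so the induction reads cleanly.
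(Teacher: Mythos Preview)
Your proposal is correct and follows essentially the same approach as the paper: both rely on the single fact that $\mu\cdot\dim$ is additive in short exact sequences (with induced/quotient norms), and use this to reduce $\bar M$ to its full associated graded $\bigoplus_{\lambda_1,\ldots,\lambda_n} gr_{F_1}^{\lambda_1}\cdots gr_{F_n}^{\lambda_n}\bar M$, for which the formula is immediate. The paper organizes this as a single descending induction on lexicographically ordered pairs $(v,\lambda)$, while you do induction on $n$, peeling off $F_n$ first; these are cosmetic variants of the same argument.

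One small correction to your anticipated obstacle: you do \emph{not} need any commutativity of the $gr_{F_i}$ operations, and in general it does not hold (the dimensions of the iterated graded pieces may depend on the order). The statement fixes the order ($gr_{F_n}$ applied first, then $gr_{F_{n-1}}$, etc., with the remaining filtrations always the induced ones), and your induction respects exactly this order, so the issue never arises.
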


\begin{proof} Since the product $\mu\cdot \dim$ is additive with respect to short exact sequences of multifiltered spaces (using induced and quotient filtrations), this allows, by descending induction on lexicographically ordered pairs $(v,\lambda)$, to replace $\bar M$ by $\oplus_{ \lambda_1,\ldots, \lambda_n}     gr_{F_1}^{\lambda_1} \ldots gr_{F_n}^{\lambda_n} \bar M $ in the formula, each term $ gr_{F_1}^{\lambda_1} \ldots gr_{F_n}^{\lambda_n} \bar M$ being considered as multifiltered with only one notch $\lambda_v$ for the filtration $F^{\geq .}_v$.  
\end{proof}

Let then ${ \lambda_1,\ldots, \lambda_n}$ be such that $ \lambda_1+\cdots +\lambda_n$ is maximal (say with value  $\lambda$)  with $\, gr_{F_1}^{\lambda_1} \ldots gr_{F_n}^{\lambda_n} \bar M \neq 0$. Let $m\in F_1^{\geq \lambda_1}\ldots F_n^{\geq \lambda_n} M$ lift some nonzero vector in $ gr_{F_1}^{\lambda_1} \ldots gr_{F_n}^{\lambda_n} \bar M$, and let $\bar L$ be the subobject of $\bar M$ of rank one generated by $m$.
Then it is clear from the lemma that $\mu(\bar M)\leq \lambda = \mu(\bar L).$ Therefore $\mu(\bar M)\leq \nu(\bar M)$. \qed

\subsection{} Instead of multiple filtrations on a $K$-vector space, M. Rapoport \cite{Ra} considers, for a finite separable extension $L/K$,  the data $\bar M$ of a $K$-vector space $M$ endowed  with one filtration on $M_{L}= M\otimes_K L$, and  defines
$$\mu(\bar M) = \frac{1}{\dim M}\sum_{  \lambda}   \, \lambda \dim_{L} gr^\lambda M_{L}.$$
 
 \medskip The two settings can be unified by allowing more generally $L$ to be a finite etale $K$-algebra (\ie, a finite product of finite separable extensions of $K$): if $L$ is a product of $n$ copies of $K$, a filtration on $M_{L}$ amounts to the data of $n$ filtrations on $M$.
 
Let us generalize \eqref{oncemore} to this more general context.
 Let $ K^{sep}$ be a fixed separable closure of $K$. Then for any finite etale $K$-algebra $L$, there exists a finite Galois extension $K'/K$ in $  K^{sep}$ such that $L\otimes_K K'$ is a product of copies of $K'$, and one recovers the case of multifiltrations. We set $\bar M'= \bar M\otimes_K K'$.
 It is clear that $\mu(\bar M)= \mu(\bar M')$, and one also has
 $\mu_{\max}(\bar M)= \mu_{\max}(\bar M')$ by Galois descent of the (unique) subobject of $M'$ of maximal rank with maximal slope.

  \end{sloppypar}

\bigskip
\address{ 
D\'epartement de Math\'ematiques et Applications  \\ 
\'Ecole Normale Sup\'erieure\\
45 rue d'Ulm,  75230
  Paris Cedex 05\\France  \\
 }
{andre@dma.ens.fr}

\end{document}